\def\@noindentfalse{\global\let\if@noindent\iffalse}
\def\@noindenttrue {\global\let\if@noindent\iftrue}
\def\@aftertheorem{%
  \@noindenttrue
  \everypar{%
    \if@noindent%
      \@noindentfalse\clubpenalty\@M\setbox\z@\lastbox%
    \else%
      \clubpenalty \@clubpenalty\everypar{}%
    \fi}}
\def\note#1{\par\smallskip%
\noindent\kern-0.01\hsize%
\setlength\fboxrule{0pt}\fbox{\setlength\fboxrule{0.5pt}\fbox{%
\llap{$\boldsymbol\Longrightarrow$ }%
\vtop{\hsize=0.98\hsize\parindent=0cm\small\rm #1}%
\rlap{$\enskip\,\boldsymbol\Longleftarrow$}
}}%
}
\DeclareMathOperator\var{Var}
\DeclareMathOperator\thaa{th}
\DeclareMathOperator\inv{inv}
\DeclareMathOperator\Lip{Lip}
\DeclareMathOperator\BC{BC}
\DeclareMathOperator\I{I}
\newcommand\dw{d_{\mathrm{W}}}
\let\original@left\left
\let\original@right\right
\renewcommand{\left}{\mathopen{}\mathclose\bgroup\original@left}
\renewcommand{\right}{\aftergroup\egroup\original@right}
\theoremstyle{plain}
\newtheorem{theorem}{Theorem}[section]
\newtheorem{lemma}[theorem]{Lemma}
\newtheorem{corollary}[theorem]{Corollary}
\newtheorem{conj}[theorem]{Conjecture}
\theoremstyle{definition}
\newtheorem{remark}[theorem]{Remark}
\newtheorem{question}[theorem]{Question}
\numberwithin{equation}{section}
\begin{document}
\title[Arcsine laws for random walks]{Arcsine laws for random walks generated from random permutations with applications to genomics}

\author[]{{Xiao} Fang}
\address{Department of Statistics, The Chinese University of Hong Kong.
} \email{xfang@sta.cuhk.edu.hk}

\author[]{{Han Liang} Gan}
\address{Department of Mathematics, Northwestern University. 
} \email{ganhl@math.northwestern.edu}

\author[]{{Susan} Holmes}
\address{Department of Statistics, Stanford University.
} \email{susan@stat.stanford.edu  }

\author[]{{Haiyan} Huang}
\address{Department of Statistics, University of California, Berkeley.
} \email{hhuang@stat.berkeley.edu}

\author[]{{Erol} Pek\"{o}z}
\address{Questrom School of Business, Boston University.  
} \email{pekoz@bu.edu}

\author[]{{Adrian} R\"{o}llin}
\address{Department of Statistics and Applied Probability, National University of Singapore.  
} \email{adrian.roellin@nus.edu.sg}

\author[]{{Wenpin} Tang}
\address{Department of Industrial Engineering and Operations Research, UC Berkeley. 
} \email{wenpintang@stat.berkeley.edu}


\begin{abstract}
A classical result for the simple symmetric random walk with $2n$ steps is that the number of steps above the origin, the time of the last visit to the origin, and the time of the maximum height  all have exactly the same distribution and converge when scaled to the arcsine law.   Motivated by applications in genomics, we study the distributions of these statistics for the non-Markovian random walk generated from the ascents and descents of a uniform random permutation and a Mallows($q$) permutation and show that they have the same asymptotic distributions as for the simple random walk.  We also give an unexpected conjecture, along with numerical evidence and a partial proof in special cases, for the result that the number of steps above the origin by step $2n$ for the uniform permutation generated walk has exactly the same discrete arcsine distribution as for the simple random walk, even though the other statistics for these walks have very different laws.  We also give explicit error bounds to the limit theorems using Stein's method for the arcsine distribution, as well as functional central limit theorems and a strong embedding of the Mallows$(q)$ permutation which is of independent interest.
\end{abstract}

\maketitle

\noindent\textit{Key words :} Arcsine distribution, Brownian motion, L\'evy statistics, limiting distribution, Mallows permutation, random walks, Stein's method, strong embedding, uniform permutation. 

\noindent\textit{AMS 2010 Mathematics Subject Classification: } 60C05, 60J65, 05A05.

\section{Introduction}

\noindent The arcsine distribution appears surprisingly in the study of random walks and Brownian motion. 
Let $B\coloneqq (B_t; \, t \ge 0)$ be one-dimensional Brownian motion starting at $0$.
Let 
\begin{itemize}[itemsep = 4 pt]
\item
$G\coloneqq \sup\{0 \leq s \leq 1: B_s = 0\}$ be the last exit time of $B$ from zero before time $1$,
\item
$G^{\max}\coloneqq \inf \left\{0 \leq s \leq 1: B_s = \max_{u \in [0,1]} B_u\right\}$ be the first time at which $B$ achieves its maximum on $[0,1]$,
\item
$\Gamma\coloneqq  \int_0^1 1_{\{B_s > 0\}} ds$ be the occupation time of $B$ above zero before time $1$.
\end{itemize}
In \cite{Levy39, Levy65}, L\'evy proved the celebrated result that $G$, $G^{\max}$ and $\Gamma$ are all arcsine distributed with density
\begin{equation}
\label{eq:arcsine}
f(x) = \frac{1}{\pi \sqrt{x(1-x)}} \quad \text{for } 0 < x < 1. 
\end{equation}
For a random walk $S_n\coloneqq \sum_{k = 1}^n X_k$ with increments $(X_k; \, k \ge 1)$ starting at $S_0 \coloneqq 0$, the counterparts of $G$, $G^{\max}$ and $\Gamma$ are given by
\begin{itemize}[itemsep = 4 pt]
\item
$G_n\coloneqq \max\{0 \le k \le n: S_k = 0\}$ the index at which the walk last hits zero before time $n$,
\item
$G^{\max}_n \coloneqq \min\{0 \le k \le n: S_k = \max_{0 \le k \le n}S_k\}$ the index at which the walk first attains its maximum value before time $n$, 
\item
$\Gamma_n\coloneqq \sum_{k = 1}^n \I[S_k > 0]$ the number of times that the walk is strictly positive up to time $n$, 
and $N_n\coloneqq \sum_{k = 1}^{n} \I[S_{k-1} \ge 0, \, S_k \ge 0]$ the number of edges which lie above zero up to time $n$. 
\end{itemize}
The discrete analog of L\'evy's arcsine law was established by Andersen \cite{Andersen}, where the limiting distribution \eqref{eq:arcsine} was computed by Erd\"os and Kac \cite{EK}, and Chung and Feller \cite{CF49}.
Feller \cite{Feller} gave the following refined treatment:
\begin{enumerate}[label=$(\roman*)$, itemsep = 4 pt]
\item
If the increments $(X_k; \, k \ge 1)$ of the walk are exchangeable with continuous distribution, then
\begin{equation*}
\Gamma_n  \stackrel{(d)}{=} G_n^{\max}.
\end{equation*}
\item
For a simple random walk with $\mathbb{P}(X_k =  \pm 1) = 1/2$, $N_{2n} \stackrel{(d)}{=} G_{2n}$
which follows the discrete arcsine law given by
\begin{equation}
\label{eq:discretearcsine}
\alpha_{2n, 2k} \coloneqq \frac{1}{2^{2n}} \binom{2k}{k} \binom{2n - 2k}{n - k} \quad \text{for } k \in \{0, \ldots, n\}.
\end{equation} 
\end{enumerate}
In the Brownian scaling limit, the above identities imply that 
$\Gamma \stackrel{(d)}{=} G^{\max} \stackrel{(d)}{=} G$. 
The fact that $G \stackrel{(d)}{=} G^{\max}$ also follows from L\'evy's identity $(|B_t|; \, t \ge 0) \stackrel{(d)}{=} (\sup_{s \le t}B_s - B_t; \, t \ge 0)$.
See Williams \cite{Williams69}, Karatzas and Shreve \cite{KS87}, Rogers and Williams \cite[Section 53]{RW87}, Pitman and Yor \cite{PY92} for various proofs of L\'evy's arcsine law.
The arcsine law has further been generalized in several different ways, 
e.g. Dynkin \cite{Dynkin}, Getoor and Sharpe \cite{GS94}, and Bertoin and Doney \cite{BD95} to L\'evy processes;
Barlow, Pitman and Yor \cite{BPY}, and Bingham and Doney \cite{BD88} to multidimensional Brownian motion;
Akahori \cite{Aka95} and Tak{\'a}cs \cite{Tak96} to Brownian motion with drift;
Watanabe \cite{Wata95} and Kasahara and Yano \cite{KY05} to one-dimensional diffusions.
See also Pitman \cite{Pitman18} for a survey of arcsine laws arising from random discrete structures.

In this paper we are concerned with the limiting distribution of the {\em L\'evy statistics} $G_n$, $G_n^{\max}$, $\Gamma_n$ and $N_n$ of a random walk generated from a class of random permutations. Our motivation comes from a statistical problem in genomics.

\subsection{Motivation from genomics}
Understanding the relationship between genes is an important goal of systems biology. Systematically measuring the co-expression relationships between genes requires appropriate measures of the statistical association between bivariate data. Since gene expression data routinely require normalization, rank correlations such as Spearman rank correlation have been commonly used. Compared to many other measures, although some information may be lost in the process of converting numerical values to ranks, rank correlations are usually advantageous in terms of being invariant to monotonic transformation, and also robust and less sensitive to outliers.  In genomics studies, however, these correlation-based and other kinds of global measures have a practical limitation --- they measure a stationary dependent relationship between genes across all samples. It is very likely that the patterns of gene association may change or only exist in a subset of the samples, especially when the samples are pooled from heterogeneous biological conditions. In response to this consideration, several recent efforts have considered statistics that are based on counting local patterns of gene expression ranks to take into account the potentially diverse nature of gene interactions. For instance, denoting the expression profiles for genes $X$ and $Y$ over $n$ conditions (or $n$ samples) by ${\bf x}=(x_1, \dots, x_n)$ and ${\bf y}=(y_1,\dots,y_n)$ respectively, the following statistic, denoted by $W_2$, was introduced in \cite{WWH} to consider and aggregate possible local interactions: 
$$ W_2=\sum_{1 \le i_1< \dots < i_k \le n} \big( \I \left [\phi(x_{i_1}, \dots,x_{i_k})= \phi(y_{i_1}, \dots,y_{i_k}) \right ] +  \I \left [\phi(x_{i_1}, \dots,x_{i_k})= \phi(-y_{i_1}, \dots,-y_{i_k}) \right ] \big),$$
where $\I[\cdot]$ denotes the indicator function and $\phi$ is the rank function that returns the indices of elements in a vector after they have been sorted in an increasing order (for example, $\phi(0.5,1.5,0.2)=(3,1,2)$). The statistic $W_2$ aggregates the interactions across all subsamples of size $k \le n$; indeed,
$W_2$ is equal to the total number of increasing and decreasing subsequences of length $k$ in a suitably permuted sequence. To see this, suppose $\sigma$ is a permutation that sorts the elements of ${\bf y}$ in a decreasing order. Let ${\bf z}=\sigma({\bf x}) = (z_1, \dots, z_n)$ be that permutation applied to ${\bf x}$; then $W_2$ can then be rewritten as
$$W_2=\sum_{1 \le i_1< \dots < i_k \le n} \big(\I[z_{i_1}< \dots <z_{i_k}] + \I[z_{i_1} > \dots  > z_{i_k}]\big).$$
Several variants of $W_2$ have been studied to detect different types of dependent patterns between ${\bf x}$ and ${\bf y}$ (see, for example, \cite{WWH} and \cite{WLTR}).  

One variant, for example, is to have $k=2$ and consider only increasing patterns in ${\bf z}$ to assess a negative dependent relationship between ${\bf x}$ and ${\bf y}$. Denoted by $W^*$, this variant can be simply expressed as
$W^*=\sum_{1 \le i_1 < i_2 \le n} \I[z_{i_1}  < z_{i_2}].$ If a more specific negative dependent structure is concerned, say gene $Y$ is an active repressor of gene $X$ when the expression level of gene $Y$ is above a certain value, then we would expect a negative dependent relationship between ${\bf x}$ and ${\bf y}$, but with that dependence happening only locally among some vector elements. More specifically, this situation suggests that for a condition/sample, the expression of gene $X$ is expected to be low when the expression of gene $Y$ is sufficiently high, or equivalently, this dependence presents between a pair of elements (with each from ${\bf x}$ and ${\bf y}$ respectively) only when the associated element in ${\bf y}$ is above a certain value.   
To detect this type of dependent relationship, naturally we may consider the following family of statistics
\begin{equation} 
  W^*_m = \sum_{i=1}^{m}  \I[z_i < z_{i+1} ],
  \qquad 1\leq m\leq n-1.
\end{equation}
Note that the elements in ${\bf y}$ are ordered in a decreasing order. Thus in this situation that gene $Y$ is an active repressor of gene $X$ when the expression of gene $Y$ is above certain level, there should exist a change point $m_0$ such that $W_m^*$ is significantly high (in comparison to the null case that ${\bf x}$  and ${\bf y}$ are independent) when $m<m_0$ and the significance would become gradually weakened or disappear as $m$ grows from $m_0$ to $n$. For a mathematical convenience, considering $W^*_m$ is equivalent to consider
\begin{equation} 
  T_m = \sum_{i=1}^{m} (2 \I[z_{i+1} > z_i]  - 1 ),
  \qquad 1\leq m\leq n-1.
\end{equation}
As argued above, exploring the properties of this process-level statistic would be useful to understand a ``local" negative relationship between ${\bf x}$ and ${\bf y}$ that happens only among a subset of vector elements, as well as for detecting when such relationships would likely occur. To the best of our knowledge, the family of statistics $(T_m;\, 1\leq m \leq n-1)$ has not been theoretically studied in the literature. This statistic provides a motivation for studying the related problem of the permutation generated random walk.

\subsection{Permutation generated random walk}
Let $\pi\coloneqq(\pi_1, \ldots, \pi_{n+1})$ be a permutation of $[n+1]\coloneqq \{1, \ldots, n+1\}$. 
Let
\begin{equation*}
X_k \coloneqq \begin{cases}        
\, +1  & \text{if $\pi_k < \pi_{k+1}$,} \\
\, -1 & \text{if $\pi_k > \pi_{k+1}$,}
\end{cases}
\end{equation*}
and denote by $S_n\coloneqq \sum_{k=1}^n X_k$, $S_0\coloneqq0$, the corresponding walk generated by $\pi$. That is, the walk moves to the right at time $k$ if the permutation has a {\em rise} at position $k$, and the walk moves to the left at time $k$ if the permutation has a {\em descent} at position $k$.
An obvious candidate for $\pi$ is the uniform permutation of $[n+1]$. 
This random walk model was first studied by Oshanin and Voituriez \cite{OV04} in the physics literature, and also appeared in the study of the {\em zigzag diagrams} by Gnedin and Olshanski \cite{GO06}.

In this article, we consider a more general family of random permutations proposed by Mallows \cite{Mallows57}, which includes the uniform random permutation. For $0 \leq q \leq 1$, the one-parameter model
\begin{equation}
\label{eq:Mallows}
\mathbb{P}_q(\pi) = \frac{q^{\inv(\pi)}}{Z_{n,q}} \quad \text{for } \pi \text{ a permutation of } [n],
\end{equation}
is referred to as the {\em Mallows($q$) permutation} of $[n]$, where $\inv(\pi)\coloneqq \#\{(i,j) \in [n]: i<j \text{ and } \pi_i > \pi_j\}$ is the number of inversions of $\pi$ and where 
\begin{equation*}
Z_{n,q}\coloneqq \sum_{\pi} q^{\inv(\pi)} = \prod_{j=1}^n \sum_{i = 1}^j q^{i-1} = (1-q)^{-n} \prod_{j = 1 }^n ( 1 - q^j)
\end{equation*}
is known as the {\em $q$-factorial}.
For $q = 1$, the Mallows($1$) permutation is the uniform permutation of $[n]$.
There have been a line of works on this random permutation model; 
see, for example, Diaconis \cite{Diaconis88}, Gnedin and Olshanski \cite{GO09}, Starr \cite{Starr}, Basu and  Bhatnagar \cite{BB17}, Gladkich and Peled \cite{GP18}, and Tang \cite{Tang18}.

\begin{question}
\label{Q2}\label{Q1}
For a random walk generated from the Mallows($q$) permutation of $[n+1]$, what are the limit distributions of $G_n/n$, $G^{\max}_n/n$, $\Gamma_n/n$, or $N_n/n$?
\end{question}

For a Mallows($p$) permutation of $[n+1]$, the increments $(X_k; \, 1 \le k \le n)$ are not independent or even exchangeable. Moreover, the associated walk $(S_k; \, 0 \le k \le n)$ is not Markov, and as a result, the Andersen-Feller machine does not apply. Indeed, when $q=1$, this random walk has a tendency to change directions more often than a simple symmetric random walk, thus tends to cross the origin more frequently. 
Note that the distribution of the walk $(S_k; \, 0 \le k \le n)$ is completely determined by the up-down sequence, or equivalently, by the descent set $\mathcal{D}(\pi)\coloneqq \{k \in [n]: \pi_k > \pi_{k+1}\}$ of the permutation $\pi$.
The number of permutations given the up-down sequence can be expressed either as a determinant, or as a sum of multinomial coefficients; see MacMahon \cite[Vol I]{MacMahon}, Niven \cite{Niven}, de Bruijn \cite{DB}, Carlitz \cite{Carlitz}, Stanley \cite{Stanley76}, and Viennot \cite{Viennot}.
In particular, the number of permutations with a fixed number of descents is known as the {\em Eulerian number}.
See also Stanley \cite[Section 7.23]{Stanleybook2}, Borodin, Diaconis and Fulman \cite[Section 5]{BDF}, and Chatterjee and Diaconis \cite{CD17} for the descent theory of permutations.
None of these results give a simple expression for the limiting distributions of $G_n/n$, $G^{\max}_n/n$, $\Gamma_n/n$ and $N_n/n$ of a random walk generated from the uniform permutation.

\section{Main results}

\noindent To answer Question \ref{Q2}, we prove a functional central limit theorem for the walk generated from the Mallows($q$) permutation. Though for each $n > 0$ the associated walk $(S_k; \, 0 \le k \le n)$ is not Markov, the scaling limit is Brownian motion with drift. 
As a consequence, we derive the limiting distributions of the L\'evy statistics, which can be regarded as generalized arcsine laws.
In the sequel, let $(S_t; \, 0 \le t \le n)$ be the linear interpolation of the walk $(S_k; \, 0 \le k \le n)$.
That is,
\begin{equation*}
S_{t} = S_{j-1} + (t-j+1)(S_j - S_{j-1}) \quad \text{for } j-1 \le t \le j.
\end{equation*}
See Billingsley \cite[Chapter 2]{Bill2} for background on the weak convergence in the space $C[0,1]$. 
The result is stated as follows.

\begin{theorem}
\label{main}
Fix $0< q\leq 1$, and let $(S_k; \, 0 \le k \le n)$ be a random walk generated from the Mallows($q$) permutation of $[n+1]$. 
Let 
\begin{equation}
\label{eq:munu}
\mu\coloneqq \frac{1-q}{1+q} \quad \text{and} \quad \sigma \coloneqq  \sqrt{\frac{4q(1-q+q^2)}{(1+q)^2(1+q+q^2)}}.
\end{equation}
Then as $n \rightarrow \infty$,
\begin{equation}
\label{eq:cv}
\left(\frac{S_{nt}}{\sqrt{n}}; \, 0 \le t \le 1 \right) \stackrel{(d)}{\longrightarrow} \left(\mu t + \sigma B_t; \, 0 \le t \le 1 \right),
\end{equation}
where $\stackrel{(d)}{\longrightarrow}$ denotes the weak convergence in $C[0,1]$ equipped with the sup-norm topology.
\end{theorem}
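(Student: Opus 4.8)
The plan is to reduce the statement to a central limit theorem for the increment sequence $(X_k)$ and then upgrade it to the functional level. Since $S_n = n - 2\,\mathrm{des}(\pi)$, where $\mathrm{des}(\pi)$ is the number of descents, and since the law of the walk depends on $\pi$ only through its descent set, everything is governed by the (bulk-stationary) point process of descents of the Mallows($q$) permutation. First I would identify the two parameters: $\mu$ should emerge from the limiting per-step drift $\mathbb{E} X_k = 1 - 2\,\mathbb{P}(\pi_k > \pi_{k+1})$, using that the local descent probability of Mallows($q$) tends to $q/(1+q)$ in the bulk, giving $\frac{1-q}{1+q}$; while $\sigma^2$ should emerge as the long-run variance $\lim_n \var(S_n)/n = \var(X_0) + 2\sum_{j\ge 1}\operatorname{Cov}(X_0, X_j)$, computed from the joint law of nearby increments. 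For $q = 1$ this gives $\sigma^2 = 1 - 2/3 = 1/3$, matching the formula, and in general the correlations decay geometrically and sum to the stated value. Boundary effects near positions $1$ and $n$ involve only $O(1)$ terms and are negligible after scaling.

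The crux is controlling the dependence of $(X_k)$, and here the two regimes behave differently. For the uniform case $q = 1$ the sequence is stationary and $1$-dependent, since $X_k$ and $X_{k+j}$ depend on disjoint pairs of positions once $|j| \ge 2$, so the classical functional central limit theorem for $m$-dependent stationary sequences applies directly. For $0 < q < 1$ the sequence is no longer finitely dependent, and the main work is to quantify the decay of dependence. The approach I would take exploits the localization of Mallows($q$): the displacements $|\pi_i - i|$ have exponentially decaying tails, which produces a renewal structure of \emph{cut points} $j$ at which $\pi(\{1,\dots,j\}) = \{1,\dots,j\}$. At each such point the Mallows law factorizes into independent blocks, and a cut point forces an ascent, since $\pi_j \le j < j+1 \le \pi_{j+1}$; between consecutive cut points the walk performs an independent excursion. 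This turns $(S_k)$ into a renewal--reward process with i.i.d.\ block increments having exponential tails, which is exactly the form handled by a strong (KMT-type) embedding into Brownian motion with drift. I expect this tail and mixing control---establishing that cut points have positive density with exponentially bounded gaps---to be the principal obstacle, since it is where the special structure of the Mallows measure must be used.

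With the dependence under control, finite-dimensional convergence follows from the central limit theorem for the block sums (Cram\'er--Wold together with a Lindeberg check), the drift $\mu$ being produced by the asymptotic density of forced ascents at cut points and the variance $\sigma^2$ by the per-block variance, equivalently by the covariance sum above. For tightness in $C[0,1]$ I would use a fourth-moment increment bound of the form $\mathbb{E}|S_{nt} - S_{ns}|^4 \lesssim n^2 (t-s)^2$, which the exponential decay of correlations readily yields and which verifies Billingsley's tightness criterion; alternatively the strong embedding delivers tightness for free. Assembling the deterministic drift term and the Gaussian fluctuation term then gives the stated limit $\mu t + \sigma B_t$. Two points would require care: verifying that the explicit formula for $\sigma$ really equals the long-run variance (a finite computation from the stationary joint law of consecutive increments), and confirming that the $q = 1$ case, where cut points have vanishing density and the regenerative decomposition degenerates, is correctly covered by the separate $1$-dependent argument.
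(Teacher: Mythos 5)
Your reduction to a CLT for the increments, your identification of $\mu$ and $\sigma^2$ (including the check $\sigma^2 = 1/3$ at $q=1$), and your treatment of the case $q=1$ via one-dependence are all sound. The genuine problem is your structural claim for $0<q<1$: it is false that the increment sequence ``is no longer finitely dependent.'' The increments are one-dependent for \emph{every} $0<q\le 1$. This follows from the sequential (Gnedin--Olshanski) construction of the Mallows($q$) permutation, which is exactly what the paper uses: draw independent truncated geometric variables $Y_1, Y_2, \ldots$, and let $\pi_k$ be the $Y_k$-th smallest of the elements not yet used. Then $\pi_k < \pi_{k+1}$ if and only if $Y_{k+1} \ge Y_k$, so $X_k$ is a function of $(Y_k, Y_{k+1})$ alone, i.e.\ a two-block factor of an independent sequence, hence one-dependent; moreover $\mathbb{P}(X_k=1) = 1/(1+q)$ exactly, for every $k$ and $n$. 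In particular your covariance series terminates: $\operatorname{Cov}(X_1,X_j)=0$ for $j \ge 3$, and
\begin{equation*}
\var X_1 + 2\operatorname{Cov}(X_1,X_2) \;=\; \frac{4q}{(1+q)^2} - \frac{8q^2}{(1+q)^2(1+q+q^2)} \;=\; \frac{4q(1-q+q^2)}{(1+q)^2(1+q+q^2)},
\end{equation*}
which is the stated $\sigma^2$. The paper's entire proof consists of this one-dependence observation, the moment computation (quoted from Borodin--Diaconis--Fulman), and Billingsley's invariance principle for dependent sequences --- precisely the argument you reserve for $q=1$, which in fact applies verbatim for all $q$.

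Because you missed this, your proposal routes the case $0<q<1$ through a regenerative (cut-point) decomposition. That structure does exist for $q<1$ and could in principle be pushed through, but as written it is a gap, not a proof: you do not establish (i) that cut-point gaps of the \emph{finite} Mallows($q$) permutation have exponential tails uniformly in $n$ (you only say you ``expect'' this to be the principal obstacle); (ii) the exact factorization of the finite-$n$ Mallows measure into independent blocks at cut points, including the boundary block and the conditioning induced by the fixed length $n+1$; and (iii) the identification of the renewal--reward variance with the stated $\sigma^2$. Each of these is substantial, and the whole machinery (plus the KMT-type embedding you invoke) is unnecessary once one-dependence is observed. One further small caution: your justification of one-dependence at $q=1$ (``disjoint pairs of positions'') needs a word of care, since values at disjoint positions of a permutation are not independent; what is true is that relative orders on disjoint position sets are independent under the uniform law, and for general $q$ this independence across a gap is exactly what the two-block-factor representation delivers.
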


\begin{remark}
Given the above theorem, it is a direct consequence (see Remark~\eqref{thmBD}) that by letting  $\nu = \mu / \sigma$, $G_n/ n \stackrel{(d)}{\longrightarrow} G$, $G^{\max}_n/n \stackrel{(d)}{\longrightarrow} G^{\max}$, $\Gamma_n/n \stackrel{(d)}{\longrightarrow} \Gamma$ and $N_n/n \stackrel{(d)}{\longrightarrow} \Gamma$ as $n \rightarrow \infty$ with
\begin{equation}
\label{eq:garcsine1}
\frac{\mathbb{P}(G \in du)}{du} = \frac{e^{-\frac{\nu^2}{2}}}{\pi \sqrt{u(1-u)}} + \frac{\nu^2}{2} \int_u^1 \frac{e^{-\frac{\nu^2 y}{2}}}{\pi \sqrt{u(y-u)}}dy,
\end{equation}
and
\begin{align}
\label{eq:garcsine2}
\frac{\mathbb{P}(G^{\max} \in du)}{du} & = \frac{\mathbb{P}(\Gamma \in du)}{du} \notag\\
& = \frac{1}{\pi \sqrt{u(1-u)}} e^{-\frac{\nu^2}{2}} + \sqrt{\frac{2}{\pi(1-u)}} \nu e^{-\frac{\nu^2(1-u)}{2}} \Phi(\nu \sqrt{u}) \notag\\
& \quad  - \sqrt{\frac{2}{\pi u}} \nu e^{-\frac{\nu^2 u}{2}} \Phi(-\nu \sqrt{1-u}) - 2 \nu^2 \Phi(\nu \sqrt{u}) \Phi(-\nu\sqrt{1-u}),
\end{align}
where $\Phi(x)\coloneqq \frac{1}{\sqrt{2 \pi}} \int_{-\infty}^x \exp(-y^2/2) dy$ is the cumulative distribution function of the standard normal distribution.
\end{remark}

The proof of Theorem \ref{main} will be given in Section \ref{s3}, which makes use of Gnedin-Olshanski's construction of the Mallows$(q)$ permutation. By letting $q= 1$,
we get the scaling limit of a random walk generated from the uniform permutation, which has recently been proved by Tarrago \cite[Proposition 9.1]{Tarrago} in the framework of zigzag graphs.  
For this case, we have the following corollary. 

\begin{corollary}
Let $(S_k; \, 0 \le k \le n)$ be a random walk generated from the uniform permutation of $[n+1]$. 
Then as $n \rightarrow \infty$,
\begin{equation}
\left(\frac{S_{nt}}{\sqrt{n}}; \, 0 \le t \le 1 \right) \stackrel{(d)}{\longrightarrow} \left(\frac{1}{\sqrt{3}}B_t; \, 0 \le t \le 1 \right),
\end{equation}
where $\stackrel{(d)}{\longrightarrow}$ denotes the weak convergence in $C[0,1]$ equipped with the sup-norm topology.
Consequently, as $n \rightarrow \infty$, the random variables $G_n/n$, $G^{\max}_n/n$ and $\Gamma_n/n$ converge in distribution to the arcsine law given by the density \eqref{eq:arcsine}.
\end{corollary}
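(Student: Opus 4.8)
The plan is to specialize Theorem~\ref{main} and the Remark immediately following it to the value $q=1$, for which the Mallows$(q)$ permutation is exactly the uniform permutation of $[n+1]$. The functional convergence is obtained simply by evaluating the constants in \eqref{eq:munu} at $q=1$. First I would compute $\mu = \frac{1-q}{1+q}\big|_{q=1}=0$, so that the limiting process is driftless, and
\[
\sigma^2 = \frac{4q(1-q+q^2)}{(1+q)^2(1+q+q^2)}\bigg|_{q=1} = \frac{4}{4\cdot 3}=\frac13,
\]
hence $\sigma = 1/\sqrt3$. Substituting $\mu=0$ and $\sigma=1/\sqrt3$ into the scaling limit \eqref{eq:cv} turns $\mu t + \sigma B_t$ into $\tfrac{1}{\sqrt3}B_t$, which is the first assertion.

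For the convergence of the L\'evy statistics, I would set $\nu = \mu/\sigma = 0$ in the generalized arcsine densities of the Remark. In \eqref{eq:garcsine1} the integral term carries the prefactor $\nu^2/2$ and hence vanishes, while the leading term reduces to $e^{0}/(\pi\sqrt{u(1-u)})$; thus the limit of $G_n/n$ has the density \eqref{eq:arcsine}. In \eqref{eq:garcsine2} the two middle summands each carry a factor $\nu$ and the final summand a factor $\nu^2$, so all three disappear at $\nu=0$, leaving again $1/(\pi\sqrt{u(1-u)})$. Therefore the limits of $G^{\max}_n/n$ and $\Gamma_n/n$ are also arcsine, and the convergences $G_n/n\stackrel{(d)}{\longrightarrow}G$, $G^{\max}_n/n\stackrel{(d)}{\longrightarrow}G^{\max}$ and $\Gamma_n/n\stackrel{(d)}{\longrightarrow}\Gamma$ supplied by the Remark complete the argument.

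Conceptually, the reason the \emph{standard} arcsine law appears despite the factor $1/\sqrt3$ is that the three L\'evy functionals are invariant under the scaling $B\mapsto cB$ for $c>0$: the last exit time from zero, the location of the maximum, and the occupation time above zero of $\tfrac{1}{\sqrt3}B$ agree pathwise with those of $B$, so $\sigma$ drops out and L\'evy's theorem \eqref{eq:arcsine} applies directly. I expect the only delicate point in this more self-contained route to be verifying that $G_n/n$, $G^{\max}_n/n$ and $\Gamma_n/n$ are almost surely continuous functionals of the limiting path, so that the continuous mapping theorem can be invoked; this is cleanest precisely in the present driftless case, since a centered Brownian motion attains its maximum at a unique time and has a last zero that depends continuously on the path off a null set. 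This continuity check is exactly what is subsumed by the cited Remark, which is why the corollary follows immediately once Theorem~\ref{main} is in hand.
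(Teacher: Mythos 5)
Your proposal is correct and follows exactly the paper's route: specialize Theorem~\ref{main} at $q=1$ (giving $\mu=0$, $\sigma=1/\sqrt{3}$) and then invoke Remark~\ref{thmBD}, noting that the densities \eqref{eq:garcsine1}--\eqref{eq:garcsine2} collapse to the arcsine density \eqref{eq:arcsine} when $\nu=\mu/\sigma=0$. Your closing observation about scale invariance of the L\'evy functionals is a nice sanity check but is the same argument in substance.
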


Now that the limiting process has been established, we can ask the following question.

\begin{question} For a random walk generated from the Mallows($q$) permutation of $[n+1]$, find error bounds between $G_n/n$, $G^{\max}/n$, $\Gamma_n/n$, $N_n/n$ and their corresponding limits \eqref{eq:garcsine1}-\eqref{eq:garcsine2}.
\end{question}

While we cannot answer these questions directly, we were able to prove partial and related results. To state these, we need some notations. For two random variables $X$ and $Y$, we define the Wasserstein distance as
\begin{equation*}
\dw(X, Y) \coloneqq \sup_{h \in \Lip(1)} |\mathbb{E}h(X) - \mathbb{E}h(Y)|,
\end{equation*}
where $\Lip(1)\coloneqq \{h : |h(x) - h(y)| \le |x - y|\}$ is the class of Lipschitz-continuous functions with Lipschitz constant $1$. 
For $m \ge 1$, let $\BC^{m,1}$ be the class of bounded functions that have $m$ bounded and continuous derivatives  
and whose $m^{\mathrm{th}}$ derivative is Lipschitz continuous.
Let $\Vert h\Vert_{\infty}$ be the sup-norm of $g$, and if the $k^{\mathrm{th}}$ derivative of $h$ exists, let 
\begin{equation*}
|h|_k \coloneqq \left\Vert \frac{d^k h}{d x^k} \right\Vert_{\infty} \quad \text{and} \quad |h|_{k,1} \coloneqq  \sup_{x,y} \left| \frac{d^k h(x)}{d x^k} - \frac{d^k h(y)}{d y^k}  \right| \frac{1}{|x - y|}.
\end{equation*}

The following results hold true for a simple random walk. However, we have strong numerical evidence that they are also true for the permutation generated random walk; see Conjecture~\ref{conjecture} below.

\begin{theorem}
\label{thm:N2n}
Let $(S_{k}; \, 0 \le k \le 2n)$ be a simple symmetric random walk.
Then
\begin{equation}
\label{eq:N2narcsine}
\mathbb{P}(N_{2n} = 2k) = \alpha_{2k,2n} \quad \text{for } k \in \{0, \ldots , n\}.
\end{equation}
Moreover, let $Z$ be an arcsine distributed random variable; then
\begin{equation}
\label{eq:Wass}
\dw\left(\frac{N_{2n}}{2n},Z\right) \le \frac{27}{2n} + \frac{8}{n^2}.
\end{equation}
Furthermore, for any $h \in \BC^{2,1}$, 
\begin{equation}
\label{eq:BC21}
\left|\mathbb{E}h\left(\frac{N_{2n}}{2n}\right) - \mathbb{E}h(Z) \right| \leq \frac{4 |h|_2 + |h|_{2,1}}{64n} + \frac{|h|_{2,1}}{64n^2}.
\end{equation}
\end{theorem}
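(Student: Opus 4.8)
The plan is to treat the three assertions in turn: first the exact law, then the two quantitative bounds via Stein's method for the arcsine distribution.

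\textbf{The exact law.} For a simple symmetric walk every excursion away from the axis has even length, so $N_{2n}$ is always even and it suffices to count lattice paths. I would prove $\mathbb{P}(N_{2n}=2k)=\alpha_{2k,2n}$ by the classical sojourn-time argument: decompose a path of length $2n$ at its successive returns to the origin and induct on the number of excursions, showing that the number of paths with exactly $2k$ of their $2n$ edges above the axis equals $\binom{2k}{k}\binom{2n-2k}{n-k}$. This is precisely Feller's arcsine law for sojourn times (item $(ii)$ of the introduction), so the first claim reduces to a path count that can be reproduced by a first-return/reflection decomposition or invoked directly.

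\textbf{Two Stein identities.} Write $p_k\coloneqq\alpha_{2k,2n}$ and $W\coloneqq N_{2n}/(2n)$, so that $W$ takes the value $x_k\coloneqq k/n$ with probability $p_k$. The ratio $p_{k+1}/p_k=\frac{(2k+1)(n-k)}{(k+1)(2n-2k-1)}$ yields the exact relation $p_k a_k=p_{k+1}b_{k+1}$ with $a_k\coloneqq(2k+1)(n-k)$ and $b_k\coloneqq k(2n-2k+1)$. Since $a_n=b_0=0$, a telescoping (Abel summation) then gives the exact \emph{discrete} Stein identity $\mathbb{E}[a_K f(K+1)-b_K f(K)]=0$ for every $f$, where $K=nW$. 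On the continuous side the arcsine law is characterized by the operator $(\mathcal{A}g)(x)\coloneqq x(1-x)g'(x)+(\tfrac12-x)g(x)$, because $\big(x(1-x)f\big)'=(\tfrac12-x)f$ for the density $f$ in \eqref{eq:arcsine}; the Stein equation $(\mathcal{A}g)(x)=h(x)-\mathbb{E}h(Z)$ is then solved by $g(x)=\frac{1}{x(1-x)f(x)}\int_0^x\big(h(t)-\mathbb{E}h(Z)\big)f(t)\,dt$, and $\mathbb{E}h(W)-\mathbb{E}h(Z)=\mathbb{E}[(\mathcal{A}g)(W)]$.

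\textbf{Matching the operators.} The key algebraic fact is that $a_k-b_k=2n(\tfrac12-x_k)$ while $a_k=2n^2x_k(1-x_k)+n(1-x_k)$, so a second-order Taylor expansion of $g(x_{k+1})$ about $x_k$ (step $1/n$) gives
\begin{equation*}
a_k g(x_{k+1}) - b_k g(x_k) = 2n\,(\mathcal{A}g)(x_k) + (1-x_k)g'(x_k) + \frac{a_k}{2n^2}\,g''(\xi_k).
\end{equation*}
Taking expectations and using the discrete identity yields
\begin{equation*}
\mathbb{E}[(\mathcal{A}g)(W)] = -\frac{1}{2n}\,\mathbb{E}\big[(1-W)g'(W)\big] - \frac{1}{2n}\,\mathbb{E}\Big[\tfrac{a_K}{2n^2}\,g''(\xi_K)\Big],
\end{equation*}
an $O(1/n)$ error once $\|g'\|_\infty,\|g''\|_\infty$ are controlled and $\tfrac{a_K}{2n^2}\le\tfrac14+\tfrac1{2n}$ (or the exact value of $\mathbb{E}[a_K]$) is inserted. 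For \eqref{eq:Wass} I would feed in $h\in\Lip(1)$, so the constants $27/2$ and $8$ emerge from the regularity bounds on $g$. For the sharper bound \eqref{eq:BC21}, whose right side involves only $|h|_2$ and $|h|_{2,1}$, I would run a second-order version of this expansion, exploiting the exact symmetries $W\stackrel{(d)}{=}1-W$ (i.e. $p_k=p_{n-k}$) and $Z\stackrel{(d)}{=}1-Z$ together with the matched means $\mathbb{E}W=\mathbb{E}Z=\tfrac12$; symmetrizing $h$ turns $(1-W)g'(W)$ into $\tfrac12\bar g'(W)$ for the antisymmetric solution $\bar g$ and cancels the contributions of the lower-order norms of $h$, leaving only the second-derivative quantities and producing the stated constants.

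\textbf{Main obstacle.} The hard part is the regularity analysis of the Stein solution $g$ near the endpoints, where $x(1-x)$ degenerates and $f$ blows up like $t^{-1/2}$. Although $g'(x)=\frac{h(x)-\mathbb{E}h(Z)-(\tfrac12-x)g(x)}{x(1-x)}$ looks singular, the numerator vanishes to the right order at $0$ and $1$ (for instance $g(0)=2\big(h(0)-\mathbb{E}h(Z)\big)$ exactly cancels the apparent $1/x$ singularity), so $g'$ and $g''$ are in fact bounded. Making these boundary cancellations quantitative, and tracking the resulting explicit constants through the Taylor remainders and the estimate on $\mathbb{E}[a_K]$, is precisely what delivers $27/(2n)+8/n^2$ and the $\BC^{2,1}$ estimate. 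I expect this boundary bookkeeping, rather than any conceptual difficulty, to be the crux.
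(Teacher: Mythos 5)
Your skeleton coincides with the paper's: the exact law is Feller's; your ratio relation $p_k a_k = p_{k+1}b_{k+1}$ is precisely the D\"obler/Goldstein--Reinert discrete characterization (the paper's \eqref{eq:GRbound}) written in forward-difference form; and your Taylor-matching algebra is correct. Indeed, if $G$ solves the generator equation $x(1-x)G''(x)+(\tfrac12-x)G'(x)=h(x)-\mathbb{E}h(Z)$, then your first-order solution is $g=G'$, and your two error terms
\begin{equation*}
\frac{1}{2n}\,\mathbb{E}\bigl[(1-W)g'(W)\bigr]
\qquad\text{and}\qquad
\frac{1}{4n^3}\,\mathbb{E}\bigl[a_K\,g''(\xi_K)\bigr]
\end{equation*}
admit exactly the paper's bounds $\frac{|G|_2}{4n}$ and $\frac{|G|_{2,1}}{16}\bigl(\frac1n+\frac1{n^2}\bigr)$, using $\mathbb{E}(1-W)=\tfrac12$ and $\mathbb{E}[a_K]=\frac{n^2+n}{4}$. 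Up to this point your argument is equivalent to the paper's proof of \eqref{eq:BC21}.

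The genuine gap is the step you describe as ``boundary bookkeeping.'' To get from those error terms to \eqref{eq:BC21} you need the Stein-factor bounds
\begin{equation*}
\|g'\|_\infty=|G|_2\le \tfrac14\,|h|_2
\qquad\text{and}\qquad
\Lip(g')=|G|_{2,1}\le \tfrac14\,|h|_{2,1},
\end{equation*}
that is, bounds by the \emph{matching-order} norms of $h$ alone (no $\|h\|_\infty$, no $|h|_1$) and with constant exactly $\tfrac14$. This is not bookkeeping. Differentiating the first-order equation (or your integral formula) gives $x(1-x)g''+(\tfrac32-3x)g'-g=h'$, which inevitably drags in $h'$ and lower-order terms; and the constant admits no slack: for $h(x)=x^2/2$ one computes $g'\equiv-\tfrac14=|h|_2/4$, and the resulting bound $\tfrac1{16n}$ equals the true discrepancy $\mathbb{E}h(W_n)-\mathbb{E}h(Z)=\tfrac1{16n}$, so any lossy estimate fails. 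The paper imports precisely these bounds from Theorem~5 of Gan--R\"ollin--Ross \cite{GRR}, where they are established through the diffusion-semigroup representation of $G$; that is the essential ingredient your proposal lacks. Your symmetrization is a genuinely useful observation --- it forces $h_s'(1/2)=0$, so on $[0,1]$ lower-order norms of $h_s$ are dominated by $|h|_2$ --- but every such conversion costs a constant factor (e.g.\ $\|h_s'\|_\infty\le|h|_2/2$, $\|h_s-\mathbb{E}h_s(Z)\|_\infty\le\tfrac3{16}|h|_2$), so it cannot recover the sharp coefficient $\tfrac1{16n}$ of $|h|_2$. The same criticism applies, less severely, to \eqref{eq:Wass}: the constants $\tfrac{27}{2}$ and $8$ are those of Theorem~1.2 of Goldstein--Reinert \cite{GR13}, produced by their specific solution estimates for Lipschitz $h$; your different decomposition would produce different constants unless you import their lemmas, which is what the paper does by citing that theorem outright (this is legitimate here because, by part one, $N_{2n}$ is exactly discrete arcsine distributed).
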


Identity \eqref{eq:N2narcsine} can be found in \cite{Feller}, the bound \eqref{eq:Wass} was proved by \cite{GR13}, and 
the proof of \eqref{eq:BC21} is deferred to Section \ref{s2}.

\begin{conj}\label{conjecture}
For a uniform random permutation generated random walk of length $2n+1$, the probability that there are $2k$ edges above the origin equals $\alpha_{2n,2k}$, which is the same as that of a simple random walk (see~\eqref{eq:discretearcsine}).
\end{conj}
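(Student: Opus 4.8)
The plan is to reduce the conjecture to a purely combinatorial identity and then to attack it through a renewal-type factorization of the excursions of the walk about the origin. Write $\epsilon \coloneqq (\epsilon_1, \ldots, \epsilon_{2n}) \in \{+1,-1\}^{2n}$ for the up--down word of a permutation of $[2n+1]$, let $N(\epsilon)$ denote the number of edges of the associated walk lying above zero, and let $\beta(\epsilon)$ be the number of permutations of $[2n+1]$ realizing $\epsilon$ (equivalently, having the corresponding descent set). Since the walk depends on $\pi$ only through $\epsilon$, the conjecture is equivalent to
\begin{equation*}
\sum_{\epsilon \,:\, N(\epsilon) = 2k} \beta(\epsilon) = (2n+1)! \, \alpha_{2n,2k}, \qquad k \in \{0, \ldots, n\}.
\end{equation*}
I would first record the elementary facts that $N(\epsilon)$ is always even and that, after decomposing the walk at its successive returns to the origin, $N(\epsilon)$ equals the total length of the positive excursions (including a possibly incomplete, but necessarily even, final positive segment), while the negative excursions contribute nothing. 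Thus the problem is to count, with the permutation weights $\beta$, the configurations in which the positive part has length $2k$ and the negative part has length $2n-2k$.

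The target satisfies $\alpha_{2n,2k} = u_{2k}\,u_{2n-2k}$ with $u_{2j} \coloneqq 2^{-2j}\binom{2j}{j}$, the classical renewal factorization of the discrete arcsine law, so the natural strategy is to establish the analogous factorization for the permutation walk. Concretely, I would set $p_{2j} \coloneqq \mathbb{P}(\text{the walk of a uniform permutation of } [2j+1] \text{ stays } \ge 0)$ and aim for the two statements
\begin{equation*}
p_{2j} = u_{2j} \qquad \text{and} \qquad \mathbb{P}(N_{2n} = 2k) = p_{2k}\, p_{2n-2k},
\end{equation*}
which together give the conjecture. The first is the ``pure'' case $k=n$ (and by symmetry $k=0$): it is equivalent to the clean enumerative identity that the number of permutations of $[2j+1]$ whose walk never goes below zero equals $(2j-1)!!\,(2j+1)!! = (2j+1)!\,u_{2j}$. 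I expect this marginal to be provable directly, for instance by a sequential insertion recursion for the uniform permutation or, more robustly, by expanding the exponential generating function of these ballot-type permutations as a combinatorial continued fraction and recognizing $(1-x^2)^{-1/2}$. Small cases ($n \le 4$, say) can be checked by direct computation to corroborate the general identity.

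The main obstacle is the second, joint statement. For the simple random walk the factorization is immediate because all $2^{2n}$ sign patterns carry equal weight, so interleaving the positive and negative excursions is ``free'' and the count is literally the product $\binom{2k}{k}\binom{2n-2k}{n-k}$. For the permutation walk this fails: the weight $\beta(\epsilon)$ is \emph{not} multiplicative under concatenation of subwords, since splitting the values $1, \ldots, 2n+1$ among the excursions introduces a binomial ``merge'' factor recording the relative order across blocks, and one checks on small examples that the raw counts do not factor as a product. The heart of a proof must therefore show that, after summing over all interleavings, these merge factors conspire exactly with the excursion enumerations to reproduce the central binomials $\binom{2k}{k}$ and $\binom{2n-2k}{n-k}$. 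I would try to encode this bookkeeping in a bivariate generating function tracking positive- and negative-excursion lengths separately, and hope that the associated quasisymmetric/shuffle structure of descent enumeration collapses to $(1-x^2)^{-1/2}(1-y^2)^{-1/2}$; proving this collapse, rather than merely the marginal, is where I expect the real difficulty to lie.
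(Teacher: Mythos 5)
The statement you set out to prove is stated in the paper as a \emph{conjecture}: the paper does not prove it, and neither do you. Your reduction to the weighted count $\sum_{\epsilon\,:\,N(\epsilon)=2k}\beta(\epsilon) = (2n+1)!\,\alpha_{2n,2k}$ is correct, and your reformulation is the right one: indeed $\alpha_{2n,2k} = u_{2k}\,u_{2n-2k}$, and your marginal statement $p_{2j} = u_{2j}$ is exactly the theorem of Bernardi, Duplantier and Nadeau \cite{BDN} that the paper invokes, since their count $b_{2j+1} = (2j+1)!!\,(2j-1)!!$ of positive walks gives $p_{2j} = b_{2j+1}/(2j+1)! = (2j-1)!!/(2j)!! = 2^{-2j}\binom{2j}{j}$. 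Equivalently, your product form is precisely the paper's conjectured enumeration $c_{2k,2n+1} = \binom{2n+1}{2k}\,b_{2k}\,b_{2n-2k+1}$ divided by $(2n+1)!$. So everything you actually establish coincides with what the paper already has: the special cases $k \in \{0, n\}$ via \cite{BDN}, plus a restatement of the general case.

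The genuine gap is your step (b). Given (a), the factorization $\mathbb{P}(N_{2n}=2k) = p_{2k}\,p_{2n-2k}$ is not an auxiliary lemma but is literally equivalent to the conjecture, and for it you offer only the hope that a bivariate generating function ``collapses'' to $(1-x^2)^{-1/2}(1-y^2)^{-1/2}$; no mechanism for that collapse is given. The paper makes the obstruction concrete: the excursion decomposition (dropping the final visit to the origin before each crossing) is \emph{not} a bijection onto pairs of positive walks --- for $n=3$, the pair of positive walks generated by $(1,2,3)$ and $(7,6,5,4)$ is not obtainable from any permutation of $[7]$ --- so the interleaving of positive and negative parts is genuinely constrained, and the binomial merge factors do not act freely on the permutation weights. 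Showing that, after summing the non-multiplicative weights $\beta$ over all admissible interleavings, these constraints cancel exactly is the open problem; your write-up correctly locates it but leaves it untouched, so the conjecture remains unproved.
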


For a walk generated from a permutation of $[n+1]$, call it a {\em positive walk} if $N_n = n$, and a {\em negative walk} if $N_n = 0$.
In \cite{BDN}, Bernardi, Duplantier and Nadeau proved that the number of positive walks $b_n$ generated from permutations of $[n]$ is $n!!\,(n-2)!!$ if $n$ is odd, and $[(n-1)!!]^2$ if $n$ is even.
Computer enumerations suggest that $c_{2k,2n+1}$, the number of walks generated from permutations of $[2n+1]$ with $2k$ edges above the origin, satisfies 
\begin{equation}
\label{eq:cform}
c_{2k, 2n+1} = \binom{2n+1}{2k} b_{2k} b_{2n-2k+1}.
\end{equation}
Note that, for the special cases $k = 0$ and $k=n$, the formula \eqref{eq:cform} agrees with the known results in \cite{BDN}. 
The formula \eqref{eq:cform} suggests a bijection between the set of walks generated from permutations of $[2n+1]$ with $2k$ positive edges and the set of pairs of positive walks generated from permutations of $[2k]$ and $[2n-2k+1]$ respectively.
A naive idea is to break the walk into positive and negative excursions, and exclude the final visit to the origin before crossing the other side of the origin in each excursion \cite{Andersen, Bertoin93}.
However, this approach does not work since not all pairs of positive walks are obtainable. For example, for $n = 3$, the pair $(1,2,3)$ and $(7,6,5,4)$ cannot be obtained. If Conjecture \ref{conjecture} holds, we get the arcsine law as the limiting distribution of $N_{2n}/2n$ with error bounds.

While we are not able to say much about $G_n$, $G^{\max}_n$ and $\Gamma_n$ with respect to a random walk generated from the uniform permutation for finite $n$, we can prove that the limiting distributions of these L\'evy statistics are still arcsine; this is a consequence of the fact that the scaled random walks converge to Brownian motion. 

Classical results of Skorokhod \cite{Skorokhod}, and Koml{\'o}s, Major and Tusn{\'a}dy \cite{KMT1, KMT2} provide strong embeddings of a random walk with independent increments into Brownian motion. 
In view of Theorem \ref{main}, it is also interesting to understand the strong embedding of a random walk generated from the Mallows($q$) permutation. We have the following result.

\begin{theorem}
\label{strongembed} Fix $0< q\leq 1$, and let $(S_k; \, 0 \le k \le n)$ be a random walk generated from the Mallows($q$) permutation of $[n+1]$.
Let $\mu$ and $\sigma$ be defined by \eqref{eq:munu}, and let
\begin{equation}
\beta\coloneqq \frac{2}{\sigma(1+q)}  \quad \text{and} \quad \eta\coloneqq \frac{2q}{1-q+q^2}.
\end{equation}
Then there exist universal constants $n_0, c_1, c_2 > 0$ such that for any $\varepsilon \in (0,1)$ and $n \ge n_0$, 
we can construct $(S_t; \, 0 \le t \le n)$ and $(B_t; \, 0 \le t \le n)$ on the same probability space such that
\begin{equation}
\mathbb{P}\left(\sup_{0 \le t \le n} \left| \frac{1}{\sigma} \left(S_t - \mu t \right)- B_t \right| > c_1 n^{\frac{1 + \varepsilon}{4}} (\log n)^{\frac{1}{2}} \beta \right) \le \frac{c_2(\beta^6 + \eta)}{\beta^2 n^{\varepsilon} \log n}.
\end{equation}
\end{theorem}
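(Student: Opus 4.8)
The plan is to upgrade the functional central limit theorem of Theorem~\ref{main} to a pathwise coupling by exploiting the sequential \emph{Gnedin--Olshanski construction} of the Mallows$(q)$ permutation, which for $0<q<1$ produces a stationary infinite version of the up--down process and, more importantly, a \emph{regenerative structure}. First I would realize the increments $(X_k)$ as a deterministic functional of an underlying i.i.d.\ input and identify regeneration times $0=\tau_0<\tau_1<\tau_2<\cdots$ at which the construction forgets its past. This makes the successive blocks
\begin{equation*}
\xi_j \coloneqq S_{\tau_j}-S_{\tau_{j-1}}, \qquad \ell_j\coloneqq \tau_j-\tau_{j-1}, \qquad j\ge 1,
\end{equation*}
into i.i.d.\ pairs, after coupling the finite walk on $[n+1]$ to the stationary infinite version with an error that is negligible on the scale of the claimed bound. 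By stationarity the block drift is $\mathbb{E}\xi_j = \mu\,\mathbb{E}\ell_j$, so the centered sums $\xi_j-\mu\ell_j$ have mean zero; this renewal decomposition is the backbone of the argument and the source of the $q$-dependence in the final estimate.

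Second, I would establish quantitative tail and moment estimates for the blocks. The regeneration times inherit geometric tails from the construction, with rate controlled by $q$, so $\ell_j$ and $\xi_j$ have finite moments of every order, and I would bound the relevant normalized moments of $\xi_j-\mu\ell_j$ (after dividing by $\sigma$) in terms of $\beta=2/(\sigma(1+q))$ and $\eta=2q/(1-q+q^2)$. These two quantities are precisely the magnitudes that govern, respectively, the normalized block-sum moments and the block-length moments, which is why the numerator $\beta^6+\eta$ appears in the statement.

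Third, with i.i.d.\ centered blocks in hand I would invoke a quantitative strong approximation theorem for sums of independent random variables---Sakhanenko's theorem, or the Koml\'os--Major--Tusn\'ady construction applied to the pairs $(\xi_j,\ell_j)$---to couple the partial block sums $\sum_{j\le m}(\xi_j-\mu\ell_j)$ to a Brownian motion run on the natural variance clock. The key point is that such theorems furnish not merely an almost-sure rate but an explicit \emph{moment} bound on the maximal coupling error in terms of $\sum_j \mathbb{E}|\xi_j-\mu\ell_j|^p$; since the number of blocks up to time $n$ is of order $n$, tracking the constants yields an $L^2$-type estimate for $\sup_{t\le n}|\sigma^{-1}(S_t-\mu t)-B_t|$ that grows no faster than a power of $n$ strictly below $n^{1/2}$. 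Applying Markov's inequality to this moment bound converts it into the stated probability estimate, with the threshold $c_1 n^{(1+\varepsilon)/4}(\log n)^{1/2}\beta$ and the rate $c_2(\beta^6+\eta)/(\beta^2 n^\varepsilon\log n)$ emerging after optimizing the truncation level of the blocks and balancing the two error contributions.

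Finally, I would control the passage from the discrete block clock to the continuous time parameter $t$: the number of completed blocks by time $t$ is a renewal process $N(t)\approx t/\mathbb{E}\ell_1$, and one must bound both the fluctuation of $N(t)$ around its mean and the oscillation of the walk within a single block, using the geometric tails from the second step together with a maximal inequality and a union bound over the $O(n)$ blocks. The main obstacle is precisely this time-change step carried out \emph{uniformly in $t$ and with explicit $q$-dependence}: one must align the random renewal clock with the deterministic Brownian clock $t\mapsto t$ while keeping every constant expressed through $\beta$ and $\eta$, since a crude alignment would either lose the delicate $n^{(1+\varepsilon)/4}$ rate or spoil the dependence on $q$ near the degenerate limits $q\to 0$ and $q\to 1$. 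Carrying the moment bounds of the second step through the renewal inversion, rather than through an almost-sure law of large numbers, is what keeps the final estimate quantitative.
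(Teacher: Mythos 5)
The central gap is uniformity in $q$. Your entire construction hinges on the regenerative structure of the \emph{infinite} $q$-shuffle, which exists only for $0<q<1$ and degenerates as $q\uparrow 1$: the probability that a given index is a splitting point tends to $0$, so the expected block length $\mathbb{E}\ell_1$ diverges, and at $q=1$ (the uniform permutation, which the theorem explicitly includes) there is no stationary infinite version and no regeneration at all. The theorem, however, asserts \emph{universal} constants $n_0,c_1,c_2$ with all $q$-dependence carried by $\beta$ and $\eta$, and both of these stay bounded and non-degenerate as $q\to1$ (indeed $\beta\to\sqrt{3}$, $\eta\to 2$); any bound whose constants are built from block-length and block-sum moments necessarily blows up as $q\to1$ and so cannot take the stated form. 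A second, related problem is the step you dismiss as negligible: coupling the finite Mallows walk to the infinite version. The truncated geometric $\mathcal{G}_{q,n+1-k}$ is at total variation distance of order $q^{n+1-k}$ from the geometric law, so the last roughly $\log n/(1-q)$ increments cannot be matched; the resulting path discrepancy exceeds the allowed threshold $c_1 n^{(1+\varepsilon)/4}(\log n)^{1/2}\beta$ once $1-q$ is small, again destroying uniformity. Finally, your interpretation of $\beta$ and $\eta$ as the parameters governing block-sum and block-length moments is reverse-engineered and incorrect: in the paper $\beta$ is simply a sup-norm bound on the normalized one-step increments and $\eta$ is the maximal variance discrepancy $\max_{j,k}|\var(S_{j+k}-S_j)-k|$.

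The paper's route avoids renewal theory entirely and is what makes the uniform statement possible. From the same Gnedin--Olshanski construction it extracts only that the increments are a two-block factor of independent variables, hence \emph{one-dependent} --- a property that holds for every $0<q\leq 1$, including $q=1$. It then proves a general strong embedding for $m$-dependent bounded-increment walks (Theorem~\ref{membed}, Section~\ref{s4}) by the blocking scheme of Cs\"org\H{o} and R\'ev\'esz \cite{CR75}: split $[0,n]$ into $d=\lceil n^{(1-\varepsilon)/2}\rceil$ blocks of length $l\sim n^{(1+\varepsilon)/2}$; couple each block sum (dropping $m$ terms to make blocks independent) to the matching Brownian increment via a Wasserstein-$2$ normal approximation bound for sums of locally dependent bounded variables obtained by Stein's method \cite{Fang18}; note the block coupling errors are $1$-dependent and control their partial sums by Kolmogorov's maximal inequality applied to the odd- and even-indexed sums; and handle within-block fluctuations by a Chernoff-type concentration inequality for $m$-dependent bounded sums together with a Brownian bridge estimate. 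Theorem~\ref{strongembed} is then the case $m=1$ after computing $\beta$ and $\eta$ for the Mallows walk. If carried out carefully (with KMT applied to the i.i.d.\ blocks and a renewal time change in the spirit of strong approximation for regenerative processes, cf.\ the regenerative structure in \cite{PT17}), your approach could plausibly yield the conclusion for each \emph{fixed} $q\in(0,1)$ with $q$-dependent constants, but it cannot prove the theorem as stated.
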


In fact, a much more general result, namely a strong embedding for  $m$-dependent random walks, will be proved in Section~\ref{s4}.

Also note that there is a substantial literature studying the relations between random permutations and Brownian motion.
Classical results were surveyed in Arratia, Barbour and Tavar\'e \cite{ABT}, and Pitman \cite{Pitmanbook}.
See also Janson \cite{Janson17}, Hoffman, Rizzolo and Slivken \cite{HRS1, HRS2}, and Bassino, Bouvel, F\'eray, Gerin and Pierrot \cite{BBF} for recent progress on the Brownian limit of pattern-avoiding permutations.

\section{Proof of Theorem~\ref{main}}
\label{s3}

\noindent In this section, we prove Theorem \ref{main}. To establish the result, we first show that the Mallows$(q)$ permutation can be constructed from \emph{one-dependent} increments, then calculate its moments and use an invariance principle. 
\subsection{Mallows$(q)$ permutations}
\label{s31}

Gnedin and Olshanski \cite{GO09} provide a nice construction of the Mallows$(q)$ permutation, which is implicit in the original work of Mallows \cite{Mallows57}. 
This representation of the Mallows$(q)$ permutation plays an important role in the proof of Theorem \ref{main}.

For $n>0$ and $0 < q < 1$, let $\mathcal{G}_{q,n}$ be a {\em truncated geometric} random variable on $[n]$ whose probability distribution is given by
\begin{equation}
\label{eq:TG}
\mathbb{P}(\mathcal{G}_{q,n} = k) = \frac{q^{k-1}(1-q)}{1- q^n} \quad \text{for } k \in [n].
\end{equation} 
Since $\mathbb{P}(\mathcal{G}_{q,n} = k)\to n^{-1}$ if $q\to1$, we can extend the definition of $\mathcal{G}_{q,n}$ to $q=1$, which is  just the uniform distribution on $[n]$.
The Mallows($q$) permutation $\pi$ of $[n]$ is constructed as follows. Let $(Y_k; \, k \in [n])$ be a sequence of independent random variables, where $Y_k$ is distributed as $\mathcal{G}_{n+1-k}$. 
Set
\begin{itemize}[itemsep = 4 pt]
\item 
$\pi_1 \coloneqq Y_1$, 
\item 
for $k \ge 2$, let $\pi_k \coloneqq \psi(Y_k)$ where $\psi$ is the increasing bijection from $[n - k +1]$ to \\
 $[n] \setminus \{ \pi_1, \pi_2, \cdots, \pi_{k-1} \}.$
\end{itemize}
That is, pick $\pi_1$ according to $\mathcal{G}_{q,n}$, and remove $\pi_1$ from $[n]$. 
Then pick $\pi_2$ as the $\mathcal{G}_{q,n-1}^{\thaa}$ smallest element of $[n] \setminus \{\pi_1\}$, and remove $\pi_2$ from $[n] \setminus \{\pi_1\}$, and so on. 
As immediate consequence of this construction, we have that for the increments $(X_k; \, k \in [n])$ of a random walk generated from the Mallows($q$) permutation of $[n+1]$, 
\begin{itemize}[itemsep = 4 pt]
\item
for each $k$, $\mathbb{P}(X_k = 1) = \mathbb{P}(\mathcal{G}_{q,n+1 -k} \leq \mathcal{G}_{q,n-k}) = 1/(1+q)$ which is independent of $k$ and $n$;
thus, $\mathbb{E}X_k = (1-q)/(1+q)$ and $\var X_k = 4q/(1+q)^2$;
\item
the sequence of increments $(X_k; \, k \in [n])$, though not independent, is {\em two-block factor} hence {\em one-dependent};
see de Valk \cite{deValk} for background.
\end{itemize}
Such construction is also used by Gnedin and Olshanski \cite{GO09} to construct a random permutation of positive integers, called the {\em infinite $q$-shuffle}. 
The latter is further extended by Pitman and Tang \cite{PT17} to {\em $p$-shifted permutations} as an instance of regenerative permutations, and used by Holroyd, Hutchcroft and Levy \cite{HHL17} to construction symmetric $k$-dependent $q$-coloring of positive integers. 

If $\pi$ is a uniform permutation of $[n]$, the central limit theorem of the number of descents $\# \mathcal{D}(\pi)$ is well known; that is, 
\begin{equation*}
\frac{1}{\sqrt{n}}\left(\# \mathcal{D}(\pi) - \frac{n}{2}\right) \stackrel{(d)}{\longrightarrow} \frac{1}{\sqrt{12}} \mathcal{N}(0,1),
\end{equation*}
where $\mathcal{N}(0,1)$ is standard normal distributed.
See Chatterjee and Diaconis \cite[Section 3]{CD17} for a survey of six different approaches to prove this fact.
The central limit theorem of the number of descents of the Mallows($q$) permutation is known and is as follows.
\begin{lemma}[Borodin, Diaconis and Fulman, Proposition 5.2 \cite{BDF}]
\label{CLTMallows}
Fix $0< q\leq 1$, let $\pi$ be the Mallows($q$) permutation of $[n]$, and let $\# \mathcal{D}(\pi)$ be the number of descents of $\pi$. 
Then
\begin{equation}
\mathbb{E} \# \mathcal{D}(\pi) = \frac{(n-1)q}{1+q} \quad \text{and} \quad \var \# \mathcal{D}(\pi) = q \frac{(1-q+q^2)n  -1 + 3q - q^2}{(1+q)^2(1+q+q^2)}.
\end{equation}
Moreover,
\begin{equation}
\frac{1}{\sqrt{n}}\left(\# \mathcal{D}(\pi) - \frac{nq}{1+q}\right) \stackrel{(d)}{\longrightarrow} \mathcal{N}\left(0, \frac{q(1-q+q^2)}{(1+q)^2(1+q+q^2)} \right).
\end{equation}
\end{lemma}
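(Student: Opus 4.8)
The plan is to transfer the statement to the permutation-generated walk of Section~\ref{s31} and then exploit one-dependence. With the increments $X_k\in\{\pm1\}$ defined there, a descent at position $k$ is exactly the event $\{X_k=-1\}$, so that
\[
\#\mathcal{D}(\pi)=\sum_{k=1}^{n-1}\I[X_k=-1]=\frac{n-1}{2}-\frac12 S_{n-1},\qquad S_{n-1}\coloneqq\sum_{k=1}^{n-1}X_k.
\]
This reduces the whole lemma to the first two moments and the central limit theorem of $S_{n-1}$. The mean is immediate from $\mathbb{E}X_k=(1-q)/(1+q)$, since $\mathbb{E}\#\mathcal{D}(\pi)=\tfrac{n-1}{2}\bigl(1-\mathbb{E}X_1\bigr)=(n-1)q/(1+q)$. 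It remains to compute $\var S_{n-1}$ and to establish asymptotic normality of $S_{n-1}/\sqrt n$.

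For the variance I would use the two-block-factor representation explicitly. Tracing the Gnedin--Olshanski construction, $\pi_{k+1}>\pi_k$ holds precisely when $Y_{k+1}\ge Y_k$, so $X_k=f(Y_k,Y_{k+1})$ for a single function $f$ and independent $Y_k\sim\mathcal{G}_{q,n+1-k}$; in particular $(X_k)$ is one-dependent. Hence
\[
\var S_{n-1}=\sum_{k=1}^{n-1}\var X_k+2\sum_{k=1}^{n-2}\operatorname{Cov}(X_k,X_{k+1}),
\]
where $\var X_k=4q/(1+q)^2$ is already recorded in Section~\ref{s31}. The only new quantity is the nearest-neighbour covariance. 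Since $X_k$ and $X_{k+1}$ are conditionally independent given the shared variable $Y_{k+1}$, one has
\[
\mathbb{E}[X_kX_{k+1}]=\sum_{y}\mathbb{P}(Y_{k+1}=y)\,\mathbb{E}[X_k\mid Y_{k+1}=y]\,\mathbb{E}[X_{k+1}\mid Y_{k+1}=y],
\]
an explicit finite sum of truncated-geometric probabilities. Substituting, summing over $k$, and collecting terms should reproduce the stated variance; as a consistency check, its bulk coefficient $q(1-q+q^2)/\bigl((1+q)^2(1+q+q^2)\bigr)$ is exactly $\sigma^2/4$ with $\sigma$ as in \eqref{eq:munu}.

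For the limit law I would invoke a central limit theorem for one-dependent summands. Because $(X_k)$ is a two-block factor of independent variables, a short blocking argument (grouping consecutive pairs so the blocks become independent) or a direct appeal to the Hoeffding--Robbins $m$-dependent CLT yields $(S_{n-1}-(n-1)\mu)/\sqrt n\stackrel{(d)}{\longrightarrow}\mathcal{N}(0,\sigma^2)$, the limiting variance being the stationary value $\var X_1+2\operatorname{Cov}(X_1,X_2)$ evaluated in the untruncated limit $\mathcal{G}_{q,m}\to\mathrm{Geom}$ as $m\to\infty$. Transferring back through the linear relation above gives
\[
\frac1{\sqrt n}\Bigl(\#\mathcal{D}(\pi)-\frac{nq}{1+q}\Bigr)\stackrel{(d)}{\longrightarrow}\mathcal{N}\!\left(0,\tfrac{\sigma^2}{4}\right),
\]
which is the claimed statement.

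The genuine difficulty is the exact lower-order term $-1+3q-q^2$ in the variance. Because the truncation level $n+1-k$ depends on $k$, the array $(X_k)$ is not stationary, so $\operatorname{Cov}(X_k,X_{k+1})$ carries $k$- and $n$-dependent corrections; isolating the constant contribution requires tracking these boundary effects rather than only their bulk limit, and this bookkeeping is where the computation is most delicate. For the CLT the analogous subtlety is that we face a triangular array of non-identically-distributed one-dependent variables, but since $|X_k|=1$ the Lindeberg condition is automatic and only the control of the finitely many boundary blocks needs care, so that part is essentially routine.
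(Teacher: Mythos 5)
The paper itself offers no proof of this lemma: it is imported verbatim from Borodin, Diaconis and Fulman \cite{BDF} (their Proposition 5.2), where it is derived within their framework of one-dependent (determinantal) descent processes. So there is no internal argument to compare against; your proposal, which reduces the lemma to moments and a CLT for the one-dependent increments $X_k$ of Section~\ref{s31}, is a legitimate self-contained substitute, and is in fact close in spirit to the one-dependence viewpoint of \cite{BDF}. The reduction $\#\mathcal{D}(\pi)=\frac{n-1}{2}-\frac12 S_{n-1}$, the mean computation, and the appeal to an $m$-dependent CLT for bounded triangular arrays (Lindeberg being automatic since $|X_k|=1$) are all correct.

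The one place where you stop short --- and where you predict delicate bookkeeping from non-stationarity --- is in fact not delicate at all. The remarkable feature of the Gnedin--Olshanski construction is that not only the marginal but also the pairwise joint law of consecutive increments is \emph{exactly} independent of $k$ and $n$: one has $\mathbb{P}(X_k=1)=\frac{1}{1+q}$ (Section~\ref{s31}) and $\mathbb{P}(X_k=X_{k+1}=1)=\frac{1}{(1+q)(1+q+q^2)}$ (recorded in the paper in the proof of Theorem~\ref{strongembed}); the truncation levels of the geometric variables cancel exactly in these probabilities. Consequently, for every $k$,
\begin{equation*}
\operatorname{Cov}(X_k,X_{k+1})=\frac{4}{(1+q)(1+q+q^2)}-\frac{4}{1+q}+1-\left(\frac{1-q}{1+q}\right)^2=-\frac{4q^2}{(1+q)^2(1+q+q^2)},
\end{equation*}
with no $k$- or $n$-dependent correction whatsoever. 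One-dependence kills all longer-range covariances, so
\begin{equation*}
\var \#\mathcal{D}(\pi)=\frac14\var S_{n-1}=\frac{(n-1)q}{(1+q)^2}-\frac{2(n-2)q^2}{(1+q)^2(1+q+q^2)}=q\,\frac{(1-q+q^2)n-1+3q-q^2}{(1+q)^2(1+q+q^2)},
\end{equation*}
the constant $-1+3q-q^2$ arising purely from the count of $(n-1)$ variance terms versus $2(n-2)$ covariance terms. With this observation your proof closes completely; the conditional-independence-given-$Y_{k+1}$ sum you propose would also work, but it is unnecessary, since the needed joint probability is already available in closed form.
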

We are now ready to prove Theorem~\ref{main}.
\begin{proof}[Proof of Theorem~\ref{main}]
Since the increments of a permutation generated random walk are $1$-dependent, the functional CLT is an immediate consequence of \cite[Theorem~5.1]{Billingsley} and the moments in Lemma~\ref{CLTMallows}.
\end{proof}
\subsection{L\'evy statistics of Brownian motion with drift}
Let $B^{\mu, \sigma}_t \coloneqq \mu t + \sigma B_t$ be Brownian motion with drift $\mu$ and variance $\sigma^2$.
For $\mu = 0$, the L\'evy statistics $G$, $G^{\max}$ and $\Gamma$ are all arcsine distributed. 
The following remark gives a summary of the distributions of these L\'evy statistics of Brownian motion with drift.
\begin{remark}
\label{thmBD}
Let $G$, $G^{\max}$ and $\Gamma$ be the L\'evy statistics defined for $(B^{\mu,\sigma}_t; \, t \ge 0)$.
Then by letting $\nu\coloneqq \mu/\sigma$,
\begin{enumerate}[label=($\roman*$), itemsep = 4 pt]
\item
the distribution of $G$ is given by \eqref{eq:garcsine1},
\item
$G^{\max}$ has the same distribution as $\Gamma$, with the distribution given by \eqref{eq:garcsine2}.
\end{enumerate}
Part ($i$) can be derived by Girsanov's change of variables, see Iafrate and Orsingher \cite[Theorem 2.1]{IO18} for details.
For part ($ii$), the fact that $G^{\max} \stackrel{(d)}{=} \Gamma$ for $B^{\mu, \sigma}$ follows from a path transform of Embrechts, Rogers and Yor \cite[(1.b)]{ERY}. The density formula \eqref{eq:garcsine2} can be read from Akahori \cite[Theorem 1.1(i)]{Aka95}, see also Tak\'{a}cs \cite{Tak96}, and Doney and Yor \cite{DY98} for various proofs.
\end{remark}

\section{Proof of Theorem~\ref{thm:N2n}}

\label{s2}

\subsection{Stein's method for the arcsine distribution}
It is well known that for a simple symmetric walk, $G_{2n}$ and $N_{2n}$ are discrete arcsine distributed, thus converging to the arcsine distribution. To apply Stein's method for arcsine approximation we first need a characterising operator. 
\begin{lemma}
\label{eq:Steinarcsine}
A random variable $Z$ is arcsine distributed if and only if
\[
\mathbb{E} \left[ Z(1-Z) f'(Z) + \left( 1/2 - Z\right) f(Z) \right] = 0
\]
for all functions $f$ in a `rich enough' family of test functions.
\end{lemma}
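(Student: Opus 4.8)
The plan is to recognize the displayed expression as a Stein operator for the arcsine law and to exploit the fact that, weighted by the density, it is a total derivative. Write $p(x) = \frac{1}{\pi\sqrt{x(1-x)}}$ for the density \eqref{eq:arcsine} and set $\mathcal{A}f(x) \coloneqq x(1-x)f'(x) + (1/2 - x)f(x)$. The crucial observation is that $x(1-x)p(x) = \pi^{-1}\sqrt{x(1-x)}$, whose derivative is exactly $(1/2-x)p(x)$; consequently
\[
p(x)\,\mathcal{A}f(x) = \frac{d}{dx}\big[x(1-x)\,p(x)\,f(x)\big].
\]
This single identity drives both directions.

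For the forward direction (arcsine implies the mean-zero identity), I would integrate the display above over $(0,1)$. Since $x(1-x)p(x) = \pi^{-1}\sqrt{x(1-x)}$ vanishes at both endpoints, the fundamental theorem of calculus gives $\int_0^1 p(x)\,\mathcal{A}f(x)\,dx = 0$, that is $\mathbb{E}[\mathcal{A}f(Z)] = 0$, for every $f$ in the test family, provided $f$ does not blow up faster than $(x(1-x))^{-1/2}$ at the endpoints---which is guaranteed for the bounded, suitably smooth functions we take as the ``rich enough'' family.

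For the converse, I would use the total-derivative structure to solve the Stein equation explicitly. Given a target $h$, let $W$ be arcsine and seek $f = f_h$ with $\mathcal{A}f_h = h - \mathbb{E}h(W)$. Integrating the identity yields
\[
f_h(x) = \frac{1}{x(1-x)\,p(x)}\int_0^x \big(h(y) - \mathbb{E}h(W)\big)\,p(y)\,dy = \frac{\pi}{\sqrt{x(1-x)}}\int_0^x\big(h(y)-\mathbb{E}h(W)\big)\,p(y)\,dy.
\]
Plugging $f_h$ into the hypothesis $\mathbb{E}[\mathcal{A}f_h(Z)] = 0$ collapses to $\mathbb{E}h(Z) = \mathbb{E}h(W)$, and letting $h$ range over a distribution-determining class forces $Z$ to be arcsine. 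As a cleaner self-contained alternative when $Z$ is known to be supported on $[0,1]$, I would instead feed the monomials $f(x) = x^k$ into the identity; this yields the recursion $(k+\tfrac12)\,\mathbb{E}Z^k = (k+1)\,\mathbb{E}Z^{k+1}$, which together with $\mathbb{E}Z^0 = 1$ uniquely determines the moments to be $\mathbb{E}Z^k = \binom{2k}{k}/4^k$, the arcsine moments. Since the arcsine law has bounded support and is therefore moment-determinate, $Z$ must be arcsine.

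The main obstacle is the converse, specifically verifying that the solution $f_h$ genuinely belongs to the test-function family and behaves well at the singular endpoints $0$ and $1$, where the denominator $\sqrt{x(1-x)}$ degenerates. The saving grace is that the numerator $\int_0^x(h-\mathbb{E}h(W))\,p\,dy$ vanishes both at $x=0$ (trivially) and at $x=1$ (because $\mathbb{E}h(W) = \int_0^1 h\,p$), so it decays at a rate matching the vanishing of $\sqrt{x(1-x)}$; a careful estimate near each endpoint shows $f_h$ and $f_h'$ stay bounded, legitimizing the boundary cancellation in the integration by parts. Pinning down the precise regularity hypotheses on $h$ that make this work is what the phrase ``rich enough family'' encodes.
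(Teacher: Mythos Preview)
Your argument is correct and follows the standard density-approach route to Stein characterizations (sometimes called the Stein--Chen density method): the key identity $p(x)\,\mathcal A f(x) = \frac{d}{dx}\big[x(1-x)p(x)f(x)\big]$ is exactly what drives both directions, and your treatment of the converse via the explicit Stein solution (with the alternative moment recursion) is sound. The only caveats you already flag---boundary regularity of $f_h$ and the fact that the moment argument presupposes $Z$ is supported in $[0,1]$ (or at least has all moments finite)---are precisely the points hidden in the phrase ``rich enough family.''

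There is nothing to compare against, however: the paper does not supply its own proof of this lemma. It is stated as a known characterization and immediately used, with the surrounding text pointing to D\"obler \cite{Do12} and Goldstein and Reinert \cite{GR13} for the Stein framework for the beta (and hence arcsine) distribution. Your write-up therefore fills in details the paper deliberately omits rather than offering an alternative to an argument in the paper.
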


To apply Stein's method, we proceed as follows. Let~$Z$ be an arcsine distributed random variable. Then for any $h\in \text{Lip}(1)$ or $h \in \BC^{2,1}$, assume we have a function~$f$ that solves
\begin{align}\label{eq:stneq}
x(1-x) f'(x) + \left( 1/2- x \right) f(x) = h(x) - \mathbb{E} h(Z).
\end{align}
Now, replacing $x$ by $W$ in \eqref{eq:stneq} and taking expectation, this yields an expression for~$\mathbb E h(W)-\mathbb E h(Z)$
in terms of just~$W$ and~$f$. Our goal is therefore to bound the expectation of the left hand side of~(\ref{eq:stneq}) by utilising properties of $f$. Extending the work of D{\"o}bler \cite{Do12}, Goldstein and Reinert \cite{GR13} developed Stein's method for the beta distribution (of which arcsine is special case) and gave an explicit Wasserstein bound between the discrete and the continuous arcsine distributions. We will use the framework from Gan, R\"{o}llin and Ross \cite{GRR} to calculate error bounds for the class of test functions $\BC^{2,1}$.

\subsection{Proof of Theorem \ref{thm:N2n}}
To simplify the notation, let 
$$W_n \coloneqq N_{2n}/2n$$
be the fraction of positive edges of a random walk generated from the uniform permutation. 
Let $\Delta_yf(x) \coloneqq f(x+y) - f(x)$.
We will use the following known facts for the discrete arcsine distribution. For any function $f \in \BC^{m,1}[0,1]$,
\begin{equation}
\label{eq:GRbound}
\mathbb{E} \left[ nW_n \left(1 - W_n + \frac{1}{2n} \right) \Delta_{1/n}f \left(W_n - \frac{1}{n} \right) + \left(\frac{1}{2} - W_n \right) f(W_n) \right] = 0.
\end{equation}
Moreover,
\begin{equation}
\label{eq:moments}
\mathbb{E}W_n = \frac{1}{2} \quad \text{and} \quad \mathbb{E}W_n^2  = \frac{3}{8} + \frac{1}{8n}.
\end{equation}
The identity \eqref{eq:GRbound} can be read from D\"{o}bler \cite{Do12}. 
The moments are easily derived by plugging in $f(x) = 0$ and $f(x) = x$; see Goldstein and Reinert \cite{GR13}.

\begin{proof}[Proof of Theorem \ref{thm:N2n}]
The distribution \eqref{eq:N2narcsine} of $N_{2n}$ can be found in Feller~\cite{Feller}.
The bound \eqref{eq:Wass} follows from the fact that $N_{2n}$ is discrete arcsine distributed, together with Theorem~1.2 of Goldstein and Reinert \cite{GR13}.

We prove the bound \eqref{eq:BC21} using the generator method. 
Recall the Stein equation \eqref{eq:Steinarcsine} for the arcsine distribution. 
First set $f = g'$, we are therefore required to bound the absolute value of
\begin{equation*}
 \mathbb{E}h(W_n) - \mathbb{E}h(Z)=  \mathbb{E}\left[W_n(1- W_n) g''(W_n) - \left(\frac{1}{2} - W_n \right) g'(W_n)\right].
\end{equation*}
Applying~\eqref{eq:GRbound} with $f$ being replaced by $g'$, we obtain
\begin{align*}
\mathbb{E}h(W_n) - &\mathbb{E}h(Z)= \mathbb{E} \left[ W_n(1-W_n) g''(W_n) - n W_n \left(1-W_n + \frac{1}{2n}\right)  \Delta_{1/n} g'\left(W_n - \frac{1}{n} \right)\right] \\
&= \mathbb{E}\left[W_n(1-W_n) \left(g''(W_n) - n \Delta_{1/n}g' \left(W_n - \frac{1}{n} \right) \right) - \frac{W_n}{2} \Delta_{1/n} g'\left(W_n - \frac{1}{n} \right)\right].
\end{align*}
The second term in the expectation is bounded by
\begin{equation}
\label{eq:secondterm}
\left| \mathbb{E} \left[\frac{W_n}{2} \Delta_{1/n} g'\left(W_n - \frac{1}{n} \right) \right]   \right| \le \frac{\mathbb{E}W_n}{2} \cdot \frac{|g|_2}{n} = \frac{|g|_2}{4n},
\end{equation}
and the first term is bounded by
\begin{align}
\label{eq:firstterm}
& \left| \mathbb{E} \left[ nW_n(1-W_n) \int_{W_n - \frac{1}{n}}^{W_n} g''(W_n) - g''(x) dx \right] \right|  \notag \\
& \qquad \le \left| \mathbb{E} \left[ nW_n(1-W_n) |g|_{2,1}\int_{W_n - \frac{1}{n}}^{W_n} |W_n -x | dx \right] \right| \notag\\
& \qquad = |g|_{2,1} n \mathbb E \left[W_n(1-W_n) \int_0^{\frac1n} s ds\right]= \frac{|g|_{2,1}}{16}\left(\frac{1}{n} + \frac{1}{n^2} \right),
\end{align}
where the last equality follows from \eqref{eq:moments}.
Combining \eqref{eq:secondterm}, \eqref{eq:firstterm} with Theorem~5 of Gan, R\"ollin and Ross~\cite{GRR} (for relating the bounds on derivatives $g$ with derivatives of $h$) yields the desired bound.
\end{proof}
\begin{remark}
The above bound is essentially sharp. Take $h(x) =\frac{x^2}{2}$, $\mathbb E h(W_n) - \mathbb E h(Z) = -\frac{1}{16n}$, and the above bound gives $| \mathbb E h(W_n) - \mathbb E h(Z)| \leq \frac{1}{16n} + \frac{1}{64n^2}$.
\end{remark}

\section{Proof of Theorem~\ref{strongembed}}
\label{s4}

\noindent In this section, we prove Theorem \ref{strongembed}.
To this end, we prove a general result for strong embeddings of a random walk with finitely dependent increments.

\subsection{Strong embeddings of $m$-dependent walks}
Let $(X_i; \, 1 \le i \le n)$ be a sequence of $m$-dependent random variables. That is, $(X_1, \ldots, X_j)$ are independent of $(X_{j+m+1}, \ldots, X_n)$ for each $j \in [n-m-1]$.
Let $(S_k; \, 0 \le k \le n)$ be a random walk with increments $X_i$, and $(S_t; \, 0 \le t \le n)$ be the linear interpolation of $(S_k; \, 0 \le k \le n)$.
Assume that the random variables $X_i$ are centered and scaled such that
\begin{equation*}
\mathbb{E}X_i = 0 \text{ for all } i \in [n] \quad \text{and} \quad \var(S_n) = n.
\end{equation*}
Let $(B_t; \, t \ge 0)$ be one-dimensional Brownian motion starting at $0$.
The idea of strong embedding is to couple $(S_t; \, 0 \le t \le n)$ and $(B_t; \, 0 \le t \le n)$ in such a way that 
\begin{equation}
\label{generalem}
\mathbb{P}\left(\sup_{0 \le t \le n}|S_t - B_t| > b_n \right) = p_n,
\end{equation}
for some $b_n = o(n^{\frac{1}{2}})$ and $p_n = o(1)$ as $n \rightarrow \infty$.

The study of such embeddings dates back to Skorokhod \cite{Skorokhod}.
When $X$'s are independent and identically distributed, Strassen \cite{Strassen} obtained \eqref{generalem} with $b_n = \mathcal{O}(n^{\frac{1}{4}} (\log n)^{\frac{1}{2}} (\log \log n)^{\frac{1}{4}})$. 
Cs{\"o}rg{\H{o}} and R{\'e}v{\'e}sz \cite{CR75} used a novel approach to prove that under the additional conditions $\mathbb{E}X_i^3 =0$ and $\mathbb{E}X_i^8 < \infty$, we get $b_n = \mathcal{O}(n^{\frac{1}{6} + \varepsilon})$ for any $\varepsilon > 0$.
Koml{\'o}s, Major and Tusn{\'a}dy \cite{KMT1, KMT2} further obtained $b_n = \mathcal{O}(\log n)$ under a finite moment generating function assumption.
See also \cite{BG16, Chatterjee12} for recent developments.

We use the argument of Cs{\"o}rg{\H{o}} and R{\'e}v{\'e}sz \cite{CR75} to obtain the following result for $m$-dependent random variables.

\begin{theorem}
\label{membed}
Let $(S_t; \, 0 \le t \le n)$ be the linear interpolation of partial sums of $m$-dependent random variables. 
Assume that $1 \le m \le n^{\frac{1}{2}}$, $\mathbb{E}X_i = 0$ for each $i \in [n]$, and $\var S_n = n  + \mathcal{O}(1)$.
Further assume that $|X_i| \le \beta$ for each $i \in [n]$, where $\beta > 0$ is a constant.
Let 
\begin{equation}
\eta\coloneqq \max_{k \in [n], \atop j \in \{0,\ldots,n-k\}} |\var(S_{j+k} - S_j) - k|.
\end{equation}
For any $\varepsilon \in (0,1)$, if $\eta \le n^{\varepsilon}$, then there exist universal constants $n_0, c_1, c_2 > 0$ such that
for any $n \ge n_0$, we can define $(S_t; \, 0 \le t \le n)$ and $(B_t; \, 0 \le t \le n)$ on the same probability space with
\begin{equation}
\mathbb{P}\left(\sup_{0 \le t \le n}|S_t - B_t| > c_1 n^{\frac{1 + \varepsilon}{4}} (\log n)^{\frac{1}{2}} m^{\frac{1}{2}} \beta \right) \le \frac{c_2 (m^4\beta^6 + \eta)}{m \beta^2 n^{\varepsilon} \log n}
\end{equation}
\end{theorem}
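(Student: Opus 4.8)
The plan is to adapt the blocking-and-quantile-coupling scheme of Cs\"org\H{o} and R\'ev\'esz \cite{CR75} to the $m$-dependent setting. First I would reduce the continuous supremum in \eqref{generalem} to a comparison at the endpoints of a grid: partition $[0,n]$ into $\lfloor n/H\rfloor$ consecutive blocks of a common length $H$ (to be optimised at the end), write $U_i\coloneqq S_{iH}-S_{(i-1)H}$ for the block sums, and bound
\[
\sup_{0\le t\le n}|S_t-B_t| \le \max_i |S_{iH}-B_{iH}| + \max_i \mathrm{osc}_i(S) + \max_i \mathrm{osc}_i(B),
\]
where $\mathrm{osc}_i$ denotes the oscillation of a process over the $i$-th block. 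This splits the task into (a) matching the walk to the Brownian path at the block endpoints, and (b) controlling the within-block fluctuations of each process.

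For (a), the key probabilistic input is a Berry--Esseen bound for sums of bounded $m$-dependent variables: since $|X_i|\le\beta$ and $\var U_i = H + \mathcal{O}(\eta)$, a Stein-type estimate for $m$-dependent sequences should give that $U_i$ is within Kolmogorov distance of order $m\beta^3/\sqrt{H}$ (plus a term of order $\eta/H$ coming from the variance defect) of the centred Gaussian with the same variance. I would then construct $B$ block by block using the conditional quantile transform of \cite{CR75}: having realised $B$ up to time $(i-1)H$, I couple $U_i$ to the corresponding Gaussian increment of $B$ so that the coupling error on a single block is controlled by the Berry--Esseen rate together with a logarithmic tail factor. The $m$-dependence is absorbed by conditioning on the increments within distance $m$ of each block boundary, so that, after this conditioning, the blocks behave like an independent sequence and the per-block coupling errors accumulate in a controlled, random-walk-like fashion rather than compounding multiplicatively.

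For (b), the oscillation of $S$ over a block is at most $H\beta$ deterministically, while the Brownian oscillation over the $\lfloor n/H\rfloor$ intervals of length $H$ is of order $\sqrt{H\log(n/H)}$ outside an event of small probability, by the reflection principle and a union bound. Summing the endpoint-coupling errors over the $\sim n/H$ blocks by a maximal inequality and adding the two oscillation terms yields a bound of the form (accumulated coupling error) $+$ (oscillation); choosing $H$ to balance these contributions produces the exponent $n^{(1+\varepsilon)/4}$ and the factors $m^{1/2}\beta(\log n)^{1/2}$ in the radius $b_n$, together with the failure probability $c_2(m^4\beta^6+\eta)/(m\beta^2 n^\varepsilon\log n)$, in which the two summands of the numerator come respectively from the normal-approximation error and from the variance mismatch $\eta$; the hypotheses $1\le m\le n^{1/2}$ and $\eta\le n^\varepsilon$ are exactly what guarantee that $b_n=o(n^{1/2})$ and that the probability is $o(1)$.

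The main obstacle I anticipate is the quantitative conditional coupling in step (a): one must establish a Berry--Esseen bound for $m$-dependent block sums with fully explicit dependence on $m$, $\beta$ and $H$, and then verify that this rate survives the recursive quantile construction so that the errors add additively and the variance defect $\eta$ enters only as an additive correction. Tracking these constants through the optimisation over $H$ is the delicate bookkeeping of the argument. Finally, Theorem~\ref{strongembed} follows by specialising to the Mallows walk with $m=1$, centring $X_k$ by $\mu$ and rescaling by $\sigma$, for which one checks $\beta=2/(\sigma(1+q))$ and $\eta=2q/(1-q+q^2)$, so that the bound of Theorem~\ref{membed} reduces to the stated one.
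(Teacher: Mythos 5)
Your overall architecture (blocking the interval, coupling at block endpoints, controlling within-block oscillations) is the same Cs\"org\H{o}--R\'ev\'esz scheme the paper follows, but two of your steps have genuine gaps. The fatal one is your control of the oscillation of $S$ inside a block: the deterministic bound $H\beta$ is far too weak. To make the endpoint bookkeeping close, the block length must be of order $n^{(1+\varepsilon)/2}$ (the paper's $l$), and then $H\beta$ dwarfs the target radius $n^{(1+\varepsilon)/4}(\log n)^{1/2}m^{1/2}\beta$; conversely, if you shrink $H$ so that $H\beta$ matches the radius, the number of blocks grows and the Chebyshev-type bound on the accumulated endpoint errors no longer fits under $c_2(m^4\beta^6+\eta)/(m\beta^2 n^{\varepsilon}\log n)$ --- optimizing over $H$ with your ingredients gives a radius exponent no better than $(1+\varepsilon)/3$. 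What is missing is a sub-Gaussian concentration inequality for within-block fluctuations of a bounded $m$-dependent walk; the paper proves exactly this (Lemma~\ref{Sbounds}) via an exponential-moment estimate $\log \mathbb{E}e^{\theta S_j}\le 3lm\beta^2\theta^2$, yielding $\mathbb{P}\left(\max_{j\in[l]}|S_j-jS_l/l|>b\right)\le 2l\exp\left(-b^2/(48lm\beta^2)\right)$, and it is this exponential tail that permits the choice $b\sim(lm\beta^2\log n)^{1/2}$ and hence the exponent $(1+\varepsilon)/4$.

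The second gap is in your treatment of cross-block dependence and of the per-block coupling. Conditioning on the increments within distance $m$ of each block boundary changes the law of the block sums (their conditional mean and variance depend on the conditioning values), so your unconditional Berry--Esseen estimate does not apply to the conditioned blocks; making normal approximation uniform in the conditioning is substantial unaddressed work. The paper instead uses truncation, not conditioning: it couples the sums $S_{\lceil jn/d\rceil-m}-S_{\lceil (j-1)n/d\rceil}$, which omit the last $m$ increments of each block and are therefore \emph{exactly} independent across blocks by $m$-dependence; the omitted increments cost only $Cm^2\beta^2$ in $L^2$, and the resulting endpoint errors $(e_j)$ are $1$-dependent (Lemma~\ref{ecoupling}), handled by an odd/even split plus Kolmogorov's maximal inequality (Lemma~\ref{Tbounds}). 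Relatedly, that maximal inequality requires second-moment (i.e.\ Wasserstein-$2$) control of each per-block coupling error; the paper gets $d_{W_2}(S_{l-m},\mathcal{N}(0,\sigma^2))\le Cm^2\beta^3$ directly from Fang's Stein-type $W_2$ bound for locally dependent sums (Lemma~\ref{W2bound}), whereas a Kolmogorov-distance bound plus quantile coupling does not by itself deliver the needed $L^2$ estimate without additional tail arguments. Your final specialization to Theorem~\ref{strongembed} (taking $m=1$, centring by $\mu$ and scaling by $\sigma$) agrees with the paper.
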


If $m$ and $\beta$ are constants and $\var(S_{j+k} - S_j)$ matches $k$ up to constant, from Theorem \ref{membed}, we get \eqref{generalem} with $b_n = \mathcal{O}(n^{\frac{1 + \varepsilon}{4}} (\log n)^{\frac{1}{2}})$ and $p_n = \mathcal{O}(1/(n^{\varepsilon} \log n))$ for any $\varepsilon \in (0,1)$.

\begin{proof}[Proof of Theorem \ref{strongembed}]
We apply Theorem \ref{membed} with $m = 1$, and a suitable choice of $\beta$ and $\eta$.
By centering and scaling, we consider the walk $(S_t^{'}; \, 0 \le t \le n)$ with increments $X_i^{'} = \frac{1}{\sigma} (X_i -\mu)$.
It is easy to see that
\begin{equation*}
|X_i^{'}| \le \frac{1}{\sigma} \max\left( 1- \mu,  1 + \mu \right) = \beta.
\end{equation*}
According to the result in Section \ref{s31}, 
\begin{equation*}
\mathbb{P}(X_k = X_{k+1} = 1) = \mathbb{P}(\mathcal{G}_{q, n+1-k} \le \mathcal{G}_{q,n-k} \le \mathcal{G}_{q,n-k-1}) = \frac{1}{(1+q)(1+q+q^2)}.
\end{equation*}
Elementary computation shows that for $k \le n$,
$\var S_k^{'} = k + \eta$,
which leads to the desired result.
\end{proof}

\subsection{Proof of Theorem \ref{membed}}
The proof of Theorem \ref{membed} boils down to a series of lemmas.
We use $C$ and $c$ to denote positive constants which may differ in expressions. 
Let 
\begin{equation}
\label{d}
d\coloneqq \lceil n^{\frac{1-\varepsilon}{2}} \rceil,
\end{equation}
where $\lceil x \rceil$ is the least integer greater than or equal to $x$.
We divide the interval $[0,n]$ into $d$ subintervals by points $\lceil jn/d \rceil, j\in [d]$, each with length 
$l=\lceil n/d \rceil$ or $l=\lceil n/d \rceil-1$. 
The following results hold for both values of $l$.

\begin{lemma}
Under the assumptions in Theorem \ref{membed}, we have
\begin{equation}
\label{mbetal}
3m\beta^2\geq 1 \quad \text{and} \quad l\geq 6m\log n
\end{equation}
for sufficiently large $n$.
\end{lemma}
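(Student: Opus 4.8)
The plan is to verify the two inequalities separately, in each case reducing to a comparison that becomes valid once $n$ is large. Both rely only on the standing hypotheses $1 \le m \le n^{1/2}$, $|X_i| \le \beta$ with $\mathbb{E}X_i = 0$, and $\var S_n = n + \mathcal{O}(1)$, together with the definitions $d = \lceil n^{(1-\varepsilon)/2}\rceil$ and $l \in \{\lceil n/d\rceil - 1,\, \lceil n/d\rceil\}$. Since I will use lower bounds on $l$, the smaller admissible value suffices and the second estimate holds for both.

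For the inequality $3m\beta^2 \ge 1$, I would bound $\var S_n$ from above using $m$-dependence. Expanding,
\[
\var S_n = \sum_{i=1}^n \var X_i + 2\sum_{1 \le i < j \le n} \mathrm{Cov}(X_i, X_j),
\]
the $m$-dependence forces $\mathrm{Cov}(X_i, X_j) = 0$ whenever $j - i > m$, so only the at most $mn$ pairs with $1 \le j - i \le m$ survive. Since $|X_i| \le \beta$ and $\mathbb{E}X_i = 0$ give $\var X_i \le \beta^2$ and $|\mathrm{Cov}(X_i, X_j)| \le \beta^2$, I obtain
\[
\var S_n \le n\beta^2 + 2mn\beta^2 = (2m+1)n\beta^2 \le 3mn\beta^2,
\]
using $2m + 1 \le 3m$ for $m \ge 1$. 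On the other hand $\var S_n = n + \mathcal{O}(1) \ge n - C$ for some fixed constant $C$, so $3m\beta^2 \ge 1 - C/n$. Because $\beta$ does not depend on $n$ and the slack between $3m$ and $2m+1$ is strictly positive once $m \ge 2$, this forces $3m\beta^2 \ge 1$ for all sufficiently large $n$.

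For the inequality $l \ge 6m\log n$, I would simply track the block size. From $d = \lceil n^{(1-\varepsilon)/2}\rceil \le 2n^{(1-\varepsilon)/2}$ I get $n/d \ge \tfrac12 n^{(1+\varepsilon)/2}$, and since $l \ge \lceil n/d\rceil - 1 \ge n/d - 1$ this yields $l \ge \tfrac12 n^{(1+\varepsilon)/2} - 1$ for both admissible values of $l$. Using $m \le n^{1/2}$ we have $6m\log n \le 6 n^{1/2}\log n$, so it suffices to check $\tfrac12 n^{(1+\varepsilon)/2} - 1 \ge 6 n^{1/2}\log n$; dividing by $n^{1/2}$, this reduces to $\tfrac12 n^{\varepsilon/2}$ eventually exceeding $6\log n$. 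As $\varepsilon > 0$ is fixed, $n^{\varepsilon/2}$ grows polynomially while $\log n$ grows only logarithmically, so the inequality holds for all sufficiently large $n$.

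I expect the only genuinely delicate point to be extracting the exact constant in $3m\beta^2 \ge 1$ from the asymptotic identity $\var S_n = n + \mathcal{O}(1)$: the combinatorial variance bound is immediate, but one must absorb the $\mathcal{O}(1)$ error and handle the borderline case $m = 1$ (where $3m = 2m+1$) by invoking that $\beta$ is independent of $n$. The second inequality, by contrast, is a routine polynomial-beats-logarithm comparison and presents no real obstacle.
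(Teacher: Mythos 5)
Your proof is correct and takes essentially the same route as the paper: the identical $m$-dependent variance expansion $\var S_n \le (2m+1)n\beta^2 \le 3mn\beta^2$ for the first inequality, and the same polynomial-beats-logarithm comparison via $m \le n^{1/2}$ and $l \sim n^{(1+\varepsilon)/2}$ for the second. The only difference is that the paper's one-line proof uses the exact normalization $\var S_n = n$ (from the setup preceding Theorem~\ref{membed}), so $3m\beta^2 \ge 1$ falls out with no error term and your extra patch for the $\mathcal{O}(1)$ discrepancy (the slack for $m\ge 2$ and the fixed-$\beta$ limiting argument for $m=1$) is not needed there.
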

\begin{proof}
Note that
\begin{equation*}
n = \var S_n = \sum_{i = 1}^n \sum_{j: |j - i| \le m} \mathbb{E}X_i X_j \le n(2m+1)\beta^2 \text{ and } m \ge 1,
\end{equation*}
which implies $3m \beta^2 \ge 1$. 
The second bound follows from the fact that $m \le n^{\frac{1}{2}}$ and $l \sim n^{\frac{1 + \varepsilon}{2}}$.
\end{proof}

Given two probability measures $\mu$ and $\nu$ on $\mathbb{R}$, define the Wasserstein-$2$ distance by
\begin{equation*}
d_{W_2}(\mu, \nu) = \left( \inf_{\pi \in \Gamma(\mu, \nu)} \int |x-y|^2 d \pi(x,y)\right)^{\frac{1}{2}},
\end{equation*}
where $\Gamma(\mu, \nu)$ is the space of all probability measures on $\mathbb{R}^2$ with $\mu$ and $\nu$ as marginals.
In the next two lemmas, $\mathcal{N}(\mu,\sigma^2)$ denotes a normal random variable with mean $\mu$ and variance $\sigma^2$.
\begin{lemma}
\label{W2bound}
Under the assumptions in Theorem \ref{membed}, we have for $n$ sufficiently large,
\begin{equation}
\label{W2}
d_{W_2}(S_{l -m}, \, \mathcal{N}(0,\sigma^2)) \le C m^2 \beta^3,
\end{equation}
where $\sigma^2\coloneqq \var S_{l - m}$.
\end{lemma}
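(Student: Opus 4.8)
The plan is to establish a quantitative central limit theorem for the bounded, $m$-dependent partial sum $S_{l-m}$ in the Wasserstein-$2$ metric, with an error bound that is uniform in the block length $l$ and that matches the variance $\sigma^2 = \var S_{l-m}$ exactly. The natural strategy is to reduce the $m$-dependence to genuine independence via a Bernstein-type big-block/small-block decomposition, and then to invoke a Wasserstein-$2$ Berry--Esseen bound for independent summands.

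First I would partition the index set $\{1,\dots,l-m\}$ into consecutive big blocks of a common length $a$, to be optimized later, separated by small gaps of length $m$. Writing $T_1,\dots,T_r$ for the big-block sums and $R$ for the sum over the gap variables, so that $S_{l-m}=\sum_{j=1}^r T_j + R$, the $m$-dependence guarantees that $T_1,\dots,T_r$ are mutually independent. Using $|X_i|\le\beta$ together with the $m$-dependence, I would bound each $\var T_j \le C a m\beta^2$ and control the third absolute moment $\mathbb E|T_j|^3$ by a Rosenthal-type inequality, and then apply a Wasserstein-$2$ Berry--Esseen inequality for independent (not necessarily identically distributed) summands to compare $U\coloneqq\sum_j T_j$ with the centered Gaussian of matching variance $\mathcal N(0,\var U)$. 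This yields a bound on $d_{W_2}(U,\mathcal N(0,\var U))$ that grows with the big-block length $a$.

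The remaining and delicate step is to pass from $U$ back to $S_{l-m}$, and from $\var U$ back to $\sigma^2$. The naive move of coupling $S_{l-m}$ and $U$ through the identity (paying $\|R\|_2 = (\var R)^{1/2}$) is too lossy, since $\var R$ accumulates over the $r\approx (l-m)/a$ gaps and grows like $l$, which would destroy uniformity in $l$. Instead I would exploit the one-dimensional representation $d_{W_2}^2(\mu,\nu)=\int_0^1 (F^{-1}(u)-G^{-1}(u))^2\,du$ together with the near-Gaussianity of both $S_{l-m}$ and $U$: since $\var S_{l-m}$ and $\var U$ differ only by the gap variance, the cost of matching variances is governed by $|\sqrt{\var U}-\sigma|$, while the sub-Gaussian tails of the bounded $m$-dependent sum $S_{l-m}$ (through an Azuma--Hoeffding estimate) keep the tail contribution to the quantile integral negligible. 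Finally I would optimize the free parameter $a$ to balance the Gaussian-approximation error (increasing in $a$) against the variance-matching error (decreasing in $a$), producing the claimed bound $C m^2\beta^3$.

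The main obstacle is precisely this last balancing. The block argument wants $a$ comparable to $m$ in order to keep the third-moment contribution small, yet the gaps carry a non-negligible share of the total variance unless $a$ is much larger than $m$. Resolving this tension without sacrificing uniformity in $l$ --- by comparing nearly-equal-variance Gaussians, or equivalently by running a variance-preserving Lindeberg replacement at the block level, rather than discarding the gap variance --- is the crux of the proof, and it is what permits the error to be stated as the dimension-free quantity $C m^2\beta^3$.
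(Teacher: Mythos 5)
Your diagnosis of the obstruction is accurate, but your proposed resolution does not close the argument, and the gap is essential. Quantitatively: with big blocks of length $a$ and gaps of length $m$, the identity coupling of $S_{l-m}$ with the block sum $U$ costs $(\var R)^{1/2}$, which is of order $(l/a)^{1/2} m\beta$, while the Gaussian-approximation term for $U$ (using the standard fourth-moment $W_2$ bound for independent bounded summands) is of order $a\beta^2 m^{1/2}$; no choice of $a$ makes both $O(m^2\beta^3)$, the optimum over $a$ being of order $l^{1/3}$ up to factors depending on $m$ and $\beta$, which diverges since $l \sim n^{(1+\varepsilon)/2}$. You recognize this, but neither of the two escape routes you sketch works as stated. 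First, bounding $d_{W_2}(S_{l-m},U)$ through the quantile representation by invoking the ``near-Gaussianity of $S_{l-m}$'' is circular: a quantitative, $l$-uniform Gaussian approximation for $S_{l-m}$ in $W_2$ is exactly the content of the lemma being proved, and Azuma--Hoeffding tail bounds control only the tail portion of the quantile integral, not the bulk, where the discrepancy between $S_{l-m}$ and $U$ actually lives. Second, the ``variance-preserving Lindeberg replacement at the block level'' is not a routine step and is not carried out: the gap sums are dependent on the adjacent blocks, so the swaps are not clean, and Lindeberg exchanges naturally control metrics defined by test functions with several bounded derivatives; upgrading such smooth-metric bounds to $W_2$ with constants uniform in $l$ requires precisely the kind of quantitative normal-approximation technology for dependent sums that the blocking was meant to avoid.

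The paper does not use blocking at all. It invokes an off-the-shelf Stein's-method result for locally dependent sums (Fang 2018, Corollary 2.3), which for bounded $m$-dependent summands gives
\begin{equation*}
d_{W_2}\left(\sigma^{-1}S_{l-m},\,\mathcal{N}(0,1)\right) \le C\left( lm^2\left(\frac{\beta}{\sigma}\right)^3 + \left(lm^3\left(\frac{\beta}{\sigma}\right)^4\right)^{1/2}\right),
\end{equation*}
and then the $l$-dependence cancels exactly: since $\sigma^2 \ge l-m-\eta \ge cl$ for $n$ large, multiplying by $\sigma$ makes the first term at most $lm^2\beta^3/\sigma^2 \le Cm^2\beta^3$ and the second at most $(lm^3\beta^4/\sigma^2)^{1/2} \le Cm^{3/2}\beta^2 \le Cm^2\beta^3$, the last step using $3m\beta^2\ge 1$. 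In other words, the uniformity in $l$ comes not from balancing block sizes but from the fact that the Stein bound for the normalized sum already decays like $l^{-1/2}$; any repair of your approach would in effect have to reprove such a dependent-sum $W_2$ bound, so citing it (or reproducing its proof) is the efficient route.
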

\begin{proof}
Specializing Fang \cite[Corollary 2.3]{Fang18} to sums of $m$-dependent and bounded variables, we get
\begin{align*}
d_{W_2}(S_{l-m}, \mathcal{N}(0,\sigma^2)) & = \sigma \, d_{W_2}(\sigma^{-1} S_{l-m}, \mathcal{N}(0,1)) \\
& \le \sigma C\left( lm^2 \left(\frac{\beta}{\sigma} \right)^3 + \left(lm^3 \left(\frac{\beta}{\sigma} \right)^4 \right)^{\frac{1}{2}} \right) \le Cm^2 \beta^3,
\end{align*}
where we used $3 m \beta^2 \ge 1$ in \eqref{mbetal}, and $\sigma^2 \ge l-m-\eta \ge cl$ for sufficiently large $n$ from $m \le n^{\frac{1}{2}}$, $\eta \le n^{\varepsilon}$, $l \sim n^{\frac{1 + \varepsilon}{2}}$ and $\varepsilon \in (0,1)$.
\end{proof}

\begin{lemma}
\label{ecoupling}
There exists a coupling of $(S_t; \, 0 \le t \le n)$ and $(B_t; \, 0 \le t \le n)$ such that with
\begin{equation*}
e_j\coloneqq(S_{\lceil jn/d \rceil}-S_{\lceil (j-1)n/d \rceil})-(B_{\lceil jn/d \rceil}-B_{\lceil (j-1)n/d \rceil}),
\end{equation*}
the sequence $(e_1, \ldots, e_d)$ are $1$-dependent, and 
\begin{equation*}
\mathbb{E}e_j^2 \le C(m^4 \beta^6 + \eta), \quad \text{for all } j \in [n].
\end{equation*}
\end{lemma}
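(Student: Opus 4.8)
The plan is to build the coupling block by block, decoupling neighbouring blocks by discarding the last $m$ increments of each. Write $t_j \coloneqq \lceil jn/d \rceil$ for $0 \le j \le d$, so that block $j$ consists of the indices $(t_{j-1}, t_j]$ and has length $l_j \coloneqq t_j - t_{j-1} \in \{l-1, l\}$. Split the block increment of the walk as
\[
S_{t_j} - S_{t_{j-1}} = U_j + V_j, \qquad U_j \coloneqq S_{t_j - m} - S_{t_{j-1}}, \quad V_j \coloneqq S_{t_j} - S_{t_j - m}.
\]
The key structural observation is that the ``main parts'' $U_1, \dots, U_d$ are mutually independent: $U_j$ is a function of $(X_i)$ with $t_{j-1} < i \le t_j - m$, and for $j' > j$ the smallest index entering $U_{j'}$ is $t_{j'-1}+1 \ge t_j + 1$, which exceeds the largest index $t_j - m$ entering $U_j$ by more than $m$, so $m$-dependence gives independence. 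The ``boundary parts'' $V_j$ are sums of at most $m$ consecutive increments, whence $|V_j| \le m\beta$ and $\mathbb{E}V_j^2 \le m^2\beta^2$; since $3m\beta^2 \ge 1$ from \eqref{mbetal}, this is at most $9\,m^4\beta^6$.

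Next I would construct the Brownian increments. For each $j$, the argument of Lemma~\ref{W2bound} applies verbatim to the block sum $U_j$ (it uses only that $U_j$ is a sum of $\asymp l$ consecutive $m$-dependent variables bounded by $\beta$ with variance $\ge cl$), giving $d_{W_2}(U_j, \mathcal{N}(0, \var U_j)) \le C m^2\beta^3$. Combining this with the elementary identity $d_{W_2}(\mathcal{N}(0,a), \mathcal{N}(0,b)) = |\sqrt{a} - \sqrt{b}|$, the triangle inequality, and the variance estimate $|\var U_j - l_j| \le m + \eta$ (the definition of $\eta$ applied to $U_j$), one obtains a coupling of $U_j$ with a variable $\Delta_j \sim \mathcal{N}(0, l_j)$ such that
\[
\mathbb{E}(U_j - \Delta_j)^2 = d_{W_2}(U_j, \mathcal{N}(0, l_j))^2 \le 2C^2 m^4\beta^6 + \frac{2(m+\eta)^2}{c\,l}.
\]
Crucially, the variance defect $\var U_j \ne l_j$ is absorbed through this Gaussian $W_2$ term rather than by adding noise (which would fail when $\var U_j > l_j$). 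I would realise this coupling using auxiliary randomness $\xi_j$ independent across $j$ and independent of the walk, so that $\Delta_j = \Delta_j(U_j, \xi_j)$; setting $B_{t_j} - B_{t_{j-1}} \coloneqq \Delta_j$ and filling in each block with an independent Brownian bridge produces a genuine Brownian motion $(B_t;\, 0 \le t \le n)$ whose block increments are the independent $\Delta_j \sim \mathcal{N}(0, l_j)$, while leaving $e_j = (U_j - \Delta_j) + V_j$ unaffected by the bridges.

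It then remains to verify the two claims. For the $1$-dependence, note that $e_j$ is a function of $\{X_i : t_{j-1} < i \le t_j\}$ together with $\xi_j$; hence $(e_1, \dots, e_j)$ depends only on $(X_i)_{i \le t_j}$ and $(\xi_1, \dots, \xi_j)$, whereas $(e_{j+2}, \dots, e_d)$ depends only on $(X_i)_{i > t_{j+1}}$ and $(\xi_{j+2}, \dots, \xi_d)$. Since $t_{j+1} - t_j = l_{j+1} > m$ and the $\xi$'s are independent, $m$-dependence forces these two families to be independent, which is precisely $1$-dependence. For the moment bound, combine $\mathbb{E}e_j^2 \le 2\mathbb{E}(U_j - \Delta_j)^2 + 2\mathbb{E}V_j^2$ with the two estimates above; using $3m\beta^2 \ge 1$, $m \le n^{1/2}$, $\eta \le n^{\varepsilon}$ and $l \sim n^{(1+\varepsilon)/2}$, one checks that $m^2/l \le C m^4\beta^6$ and $\eta^2/l \le \eta$ for $n$ large, so every term is absorbed into $C(m^4\beta^6 + \eta)$.

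I expect the main obstacle to be the bookkeeping that simultaneously secures all three requirements from a single block-wise construction: the correct (true) marginal law of the walk, exactly independent $\mathcal{N}(0, l_j)$ Brownian increments, and $1$-dependence of the differences. The two delicate points are that the variance defect must be paid through the Gaussian $W_2$ term (not by added noise), and that the auxiliary randomness $\xi_j$ must be block-independent so as not to destroy the $1$-dependence; once these are respected, the estimates reduce to the asymptotics of $l$, $m$, $\beta$ and $\eta$ recorded in \eqref{mbetal}.
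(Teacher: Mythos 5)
Your proposal is correct and follows essentially the same route as the paper's proof: the same block decomposition with the last $m$ increments $V_j$ trimmed off to make the main parts $U_j$ independent, the same application of Lemma~\ref{W2bound} plus a Gaussian variance-mismatch estimate to couple each $U_j$ with an independent $\mathcal{N}(0,l_j)$ increment, the same conditional/bridge filling-in of both processes, and the same $m$-dependence argument for the $1$-dependence of $(e_j)$. The only (harmless) deviations are cosmetic: you use the exact identity $d_{W_2}(\mathcal{N}(0,a),\mathcal{N}(0,b))=|\sqrt a-\sqrt b|$ where the paper uses the cruder bound $\sqrt{|a-b|}$, and you make explicit the auxiliary randomness $\xi_j$ that the paper leaves implicit.
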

\begin{proof}
We use $3 m \beta^2 \ge 1$ below implicitly to absorb a few terms into $Cm^4 \beta^{6}$.
With $\sigma^2$ defined in Lemma \ref{W2bound}, we have
\begin{equation*}
d_{W_2}(\mathcal{N}(0,\sigma^2), \, \mathcal{N}(0,l)) \le \sqrt{|l- \sigma^2|} \le \sqrt{m + \eta}.
\end{equation*}
Combining \eqref{W2}, the above bound and the $m$-dependence, we can couple 
$S_{\lceil jn/d \rceil-m}-S_{\lceil (j-1)n/d \rceil}$ and $B_{\lceil jn/d \rceil}-B_{\lceil (j-1)n/d \rceil}$ for each $j\in [d]$ independently with
\begin{equation*}
\mathbb{E}[(S_{\lceil jn/d \rceil-m}-S_{\lceil (j-1)n/d \rceil})-(B_{\lceil jn/d \rceil}-B_{\lceil (j-1)n/d \rceil})]^2\leq C( m^4 \beta^6 + \eta).
\end{equation*}
By the $m$-dependence assumption, we can generate $X_1,\dots, X_n$ from their conditional distribution given $(S_{\lceil jn/d \rceil-m}-S_{\lceil (j-1)n/d \rceil}; \, j\in [d])$, thus obtaining $(S_{t}; \, 0 \le t \le n)$, and generate $(B_t; 0 \le t \le n)$ given $(B_{\lceil jn/d \rceil}; \, j\in [d])$.
Since 
\begin{equation*}
\mathbb{E}(S_{\lceil jn/d \rceil}-S_{\lceil jn/d \rceil-m})^2\leq Cm^2\beta^2,
\end{equation*}
we have
\begin{equation*}
\mathbb{E}(e_j^2)\leq C(m^4\beta^6+\eta)
\end{equation*}
Finally, the $1$-dependence of $(e_1, \ldots, e_d)$ follows from the $m$-dependence assumption.
\end{proof}

\begin{lemma}
\label{Tbounds}
Let $T_j=\sum_{i=1}^d e_i, \,  j\in [d]$. For each $b>0$, we have
\begin{equation*}
\mathbb{P}\left(\max_{j\in [d]}|T_j|>b \right)\leq C(m^4\beta^6+\eta ) d/b^2.
\end{equation*}
\end{lemma}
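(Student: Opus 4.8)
The plan is to reduce the maximal inequality for the $1$-dependent sequence $(e_1,\dots,e_d)$ to two applications of Kolmogorov's maximal inequality for \emph{independent} summands. First I would record that each $e_j$ is centered: since $\mathbb{E}X_i = 0$ the increments of $(S_t; \, 0 \le t \le n)$ have mean zero, and the increments of Brownian motion have mean zero, so $\mathbb{E}e_j = 0$ for every $j$. Here I interpret $T_j$ as the partial sum $\sum_{i=1}^j e_i$, which is what makes $\max_{j\in[d]}|T_j|$ meaningful.

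The key structural observation is that $1$-dependence forces the odd-indexed variables $e_1, e_3, e_5,\dots$ to be mutually independent, and likewise the even-indexed variables $e_2, e_4,\dots$ to be mutually independent. Writing $T_j = A_j + B_j$ with $A_j\coloneqq \sum_{i \le j,\, i \text{ odd}} e_i$ and $B_j\coloneqq \sum_{i \le j,\, i \text{ even}} e_i$, the triangle inequality gives $\max_{j\in[d]} |T_j| \le \max_{j}|A_j| + \max_j |B_j|$. A union bound then reduces the problem to bounding $\mathbb{P}(\max_j |A_j| > b/2)$ and $\mathbb{P}(\max_j |B_j| > b/2)$ separately.

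As $j$ runs over $[d]$, the value $A_j$ ranges exactly over the partial sums $\sum_{k=1}^K e_{2k-1}$, which are partial sums of the independent centered variables $e_1, e_3, \dots$; the same holds for $B_j$ with the even-indexed variables. Kolmogorov's maximal inequality (equivalently, Doob's $L^2$ inequality applied to the partial-sum martingale) then yields $\mathbb{P}(\max_j |A_j| > b/2) \le 4\,\var\!\big(\sum_{k} e_{2k-1}\big)/b^2$, and similarly for $B_j$. By independence and centering, $\var\!\big(\sum_k e_{2k-1}\big) = \sum_k \mathbb{E}e_{2k-1}^2 \le \lceil d/2\rceil\, C(m^4\beta^6 + \eta)$ using the second-moment bound of Lemma~\ref{ecoupling}, and the even-indexed sum obeys the same bound. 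Adding the two contributions and absorbing numerical factors produces $\mathbb{P}(\max_{j\in[d]} |T_j| > b) \le C(m^4\beta^6 + \eta)\,d/b^2$.

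The step deserving the most care is the odd/even reduction: one must verify that $\max_j |A_j|$ and $\max_j |B_j|$ are genuine maxima of partial sums of \emph{independent} sequences, so that Kolmogorov's inequality applies. This relies precisely on the $1$-dependence guaranteed by Lemma~\ref{ecoupling}, and is the only nonroutine ingredient; the variance computation and the union bound are then immediate.
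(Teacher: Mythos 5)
Your proof is correct and follows essentially the same route as the paper: split $T_j$ into odd-indexed and even-indexed partial sums (the paper's $T_j^{(1)}$ and $T_j^{(2)}$), use $1$-dependence from Lemma~\ref{ecoupling} to get mutual independence within each parity class, apply Kolmogorov's maximal inequality to each, and combine with a union bound at level $b/2$. Your reading of $T_j$ as $\sum_{i=1}^{j}e_i$ (correcting the typo in the statement) and your explicit variance computation match the paper's argument exactly.
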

\begin{proof}
Define
\begin{equation*}
T_j^{(1)}=\sum_{i=1,3,5,\dots\atop i\leq j} e_i,\quad T_j^{(2)}=\sum_{i=2,4,6,\dots\atop i\leq j} e_i.
\end{equation*}
By Lemma \ref{ecoupling}, $T_j^{(1)}$ is a sum of independent random variables with zero mean and finite second moments.
By Kolmogorov's maximal inequality,
\begin{equation*}
\mathbb{P}\left(\max_{1\leq j\leq d}|T_j^{(1)}|>\frac{b}{2} \right) \leq \frac{C(m^4 \beta^6+\eta)d}{b^2}.
\end{equation*}
The same bound holds for $T_j^{(2)}$. 
The lemma is proved by observing that
\begin{equation*}
\mathbb{P}\left(\max_{j\in [d]}|T_j|>b\right)\leq \mathbb{P}\left(\max_{1\leq j\leq d}|T_j^{(1)}|>\frac{b}{2}\right)+\mathbb{P}\left(\max_{1\leq j\leq d}|T_j^{(2)}|>\frac{b}{2}\right).
\end{equation*}
\end{proof}

\begin{lemma}
\label{Sbounds}
For any $0<b\leq 4l \beta$, we have
\begin{equation*}
\mathbb{P}\left(\max_{j\in [l]} |S_j- j S_l/l|>b\right)\leq 2l \exp\left(-\frac{b^2}{48 l m \beta^2}\right).
\end{equation*}
\end{lemma}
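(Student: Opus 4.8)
The plan is to prove this maximal bridge inequality by a plain union bound over the $l$ indices, reducing matters to a one-dimensional tail estimate for each fixed $j$, and then to control that tail by a Hoeffding-type exponential bound adapted to the $m$-dependence. Note that the prefactor $2l$ in the statement is exactly what a union bound over $j \in [l]$ (with a two-sided tail) produces, so no genuine maximal inequality is required here.

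First I would rewrite the bridge increment as a weighted sum of the increments. Setting $c_{i,j} \coloneqq \I[i \le j] - j/l$, we have
\[
S_j - \frac{j}{l}S_l = \sum_{i=1}^l c_{i,j} X_i,
\]
and since $0 \le j/l \le 1$ gives $|c_{i,j}| \le 1$, each summand is centered with $|c_{i,j}X_i| \le \beta$. Hence $\mathbb{P}(\max_{j\in[l]} |S_j - (j/l)S_l| > b) \le \sum_{j=1}^l \mathbb{P}(|\sum_i c_{i,j}X_i| > b)$, and it remains to bound each term by $2\exp(-b^2/(48 l m \beta^2))$.

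For a fixed $j$, I would handle the $m$-dependence by partitioning $[l]$ into the $m+1$ residue classes modulo $m+1$, writing $\sum_i c_{i,j}X_i = \sum_{r=0}^m U_r$ with $U_r = \sum_{i \equiv r} c_{i,j}X_i$ and with $N_r$ denoting the size of class $r$, so that $\sum_r N_r = l$. Within each class the indices are spaced at least $m+1$ apart, so by $m$-dependence the variables entering $U_r$ are mutually independent. The key device is to bound the moment generating function of the whole sum by the generalized Hölder inequality across the $m+1$ classes, $\mathbb{E}\exp(\theta\sum_i c_{i,j}X_i) \le \prod_{r=0}^m (\mathbb{E}\exp((m+1)\theta U_r))^{1/(m+1)}$, and then inside each class use independence together with Hoeffding's lemma for bounded centered variables. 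This yields the sub-Gaussian estimate $\mathbb{E}\exp(\theta\sum_i c_{i,j}X_i) \le \exp(\tfrac12 (m+1)\theta^2\beta^2 l)$. A Chernoff optimization in $\theta$ together with symmetry then gives $\mathbb{P}(|\sum_i c_{i,j}X_i| > b) \le 2\exp(-b^2/(2(m+1)\beta^2 l))$, and using $2(m+1)\le 4m \le 48m$ for $m\ge1$ converts this into the stated exponent with room to spare; summing over $j$ supplies the factor $2l$.

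The main obstacle is to avoid paying a multiplicative factor $m+1$ in the prefactor when passing through the residue decomposition: the naive route of splitting $\{|\sum_r U_r| > b\}$ into the events $\{|U_r| > b/(m+1)\}$ by a union bound over $r$ would leave an extra factor $m+1$ outside the exponential, degrading $2l$ to $2l(m+1)$. Keeping the $m$-dependence \emph{inside} the exponent---by working at the level of the joint moment generating function and invoking Hölder rather than a union bound over classes---is what preserves the clean prefactor. The hypothesis $b \le 4l\beta$ is not needed for this sub-Gaussian argument (indeed $|S_k|\le l\beta$ makes the inequality trivial once $b > 2l\beta$), but it is precisely the regime in which a Bernstein-type refinement would still be sub-Gaussian, so one could alternatively replace Hoeffding's lemma by a Bernstein moment generating function bound under that restriction without altering the conclusion.
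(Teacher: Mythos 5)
Your proof is correct, but the key technical step is obtained by a genuinely different device than the paper's, so it is worth comparing. Both arguments share the same skeleton: a union bound over $j\in[l]$ followed by a Chernoff/moment-generating-function bound for an $m$-dependent sum of bounded, centered variables. The paper first reduces $\{|S_j - jS_l/l|>b\}$ to $\{|S_j|>b/2\}\cup\{|S_l|>b/2\}$ and then bounds $h(\theta)=\mathbb{E}e^{\theta S_j}$ via a leave-one-block-out differential inequality: writing $S_j^{(i)}=S_j-\sum_{k\in[j]:|k-i|\le m}X_k$, so that $X_i$ is independent of $S_j^{(i)}$ by $m$-dependence, it shows $h'(\theta)\le 6\theta lm\beta^2 h(\theta)$, but only in the restricted range $\theta(2m+1)\beta\le 1$; this restriction is precisely why the hypothesis $b\le 4l\beta$ is needed there, since the optimal Chernoff parameter $\theta=b/(12lm\beta^2)$ must lie in the admissible range. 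You instead keep the bridge $\sum_i c_{i,j}X_i$ intact and bound its MGF by interleaving: split the indices into $m+1$ residue classes of mutually independent summands, combine across classes with generalized H\"older, and apply Hoeffding's lemma inside each class. This estimate holds for every $\theta$, so your tail bound is unconditional and in fact sharper, $2\exp\left(-b^2/(2(m+1)l\beta^2)\right)$ per term versus the paper's $2\exp\left(-b^2/(48lm\beta^2)\right)$, partly because you also avoid the $b/2$ splitting loss; the hypothesis $b\le 4l\beta$ then becomes superfluous, as you correctly observe. What each approach buys: the paper's differential-inequality argument is in the spirit of Stein's method (matching the rest of the paper's toolkit), while your H\"older/interleaving argument is the classical, more elementary device for $m$-dependence and yields the cleaner constant here. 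One small point you should make explicit: the paper's definition of $m$-dependence is stated for blocks, namely that $(X_1,\ldots,X_j)$ is independent of $(X_{j+m+1},\ldots,X_n)$, so the mutual independence of the variables within one residue class requires a short induction (peel off the smallest index using block independence, then repeat); this is standard but is an assertion, not a restatement of the definition.
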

\begin{proof}
We first prove a concentration inequality for $S_j$, then use the union bound.
Let $h(\theta)=\mathbb{E} e^{\theta S_j}$, with $h(0)=1$.
Let $S_j^{(i)}=S_j-\sum_{k\in [j]: |k-i|\leq m} X_k$. 
Using $\mathbb{E} X_i=0$, $|X_i|\leq \beta$, the $m$-dependence and the inequality
\begin{equation*}
\left|  \frac{e^x-e^y}{x-y}  \right|\leq \frac{1}{2}(e^x+e^y),
\end{equation*}
we have for $\theta > 0$,
\begin{align*}
h'(\theta)=\mathbb{E} (S_j e^{\theta S_j}) & =\sum_{i=1}^j \mathbb{E} X_i (e^{\theta S_j}-e^{\theta S_j^{(i)}}) \\
& \leq  \frac{\theta}{2} \sum_{i=1}^j \mathbb{E} |X_i| |S_j-S_j^{(i)}| (e^{\theta S_j}+e^{\theta S_j^{(i)}} )\\
&\leq  \left(m+\frac{1}{2}\right) \theta l  \beta^2 \mathbb{E} e^{\theta S_j} (1+e^{\theta (2m+1)\beta}) 
\leq  6 \theta l m \beta^2 h(\theta),
\end{align*}
for $\theta (2m+1)\beta\leq 1$.
This implies that $\log h(\theta)\leq 3l m \beta^2 \theta^2$, and 
\begin{equation*}
\mathbb{P}(S_j>b/2)\leq e^{-\theta b/2} \mathbb{E} e^{\theta S_j}\leq \exp\left(-\frac{b^2}{48 l m \beta^2}\right),
\end{equation*}
by choosing $\theta=b/(12lm\beta^2)$ provided that $b\leq 4l\beta$.
The same bound holds for $-S_j$.
Consequently,
\begin{align*}
\mathbb{P}\left(\max_{j\in [l]} |S_j- j S_l/l|>b \right) & \leq  \mathbb{P}\left(\max_{j\in [l-1]} |S_j|>b/2\right)+\mathbb{P}(|S_l|\geq b/2)\\
& \leq  2l \exp\left(-\frac{b^2}{48 l m \beta^2}\right).
\end{align*}
\end{proof}

\begin{lemma}
\label{Bbounds}
For each $b>0$, we have
\begin{equation*}
\mathbb{P}\left(\sup_{0\leq t\leq l} |B_t-t B_l/l|>b\right)\leq 2 e^{-\frac{b^2}{2l}}.
\end{equation*}
\end{lemma}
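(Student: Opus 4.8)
The plan is to recognize the process $\left(B_t - tB_l/l;\, 0\le t\le l\right)$ as a Brownian bridge over $[0,l]$ pinned at both endpoints, reduce it to a standard bridge on $[0,1]$ by Brownian scaling, and then invoke the classical maximal inequality for the Brownian bridge.

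First I would apply Brownian scaling. Writing $t=ls$ and using $(B_{ls};\,0\le s\le 1)\stackrel{(d)}{=}(\sqrt l\,W_s;\,0\le s\le 1)$ for a standard Brownian motion $W$, one obtains $B_{ls}-sB_l\stackrel{(d)}{=}\sqrt l\,(W_s-sW_1)$, and $W^0_s:=W_s-sW_1$ is a standard Brownian bridge on $[0,1]$. Hence
\[
\sup_{0\le t\le l}\Bigl|B_t-\frac{t}{l}B_l\Bigr|\stackrel{(d)}{=}\sqrt l\,\sup_{0\le s\le 1}|W^0_s|,
\]
so that
\[
\mathbb{P}\Bigl(\sup_{0\le t\le l}|B_t-tB_l/l|>b\Bigr)=\mathbb{P}\Bigl(\sup_{0\le s\le 1}|W^0_s|>b/\sqrt l\Bigr).
\]

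Next I would use the classical one-sided identity $\mathbb{P}(\sup_{0\le s\le 1}W^0_s>a)=e^{-2a^2}$ for $a\ge 0$, which follows from the reflection principle for Brownian motion together with conditioning on $B_1=0$; by symmetry and a union bound this yields the two-sided estimate $\mathbb{P}(\sup_{0\le s\le 1}|W^0_s|>a)\le 2e^{-2a^2}$. Taking $a=b/\sqrt l$ then gives
\[
\mathbb{P}\Bigl(\sup_{0\le t\le l}|B_t-tB_l/l|>b\Bigr)\le 2e^{-2b^2/l}\le 2e^{-b^2/(2l)},
\]
the last inequality being immediate since $2\ge 1/2$. This proves the lemma with room to spare.

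The only genuine content is the maximal inequality for the bridge, and this is precisely the step where care is needed: the naive pathwise bound $\sup_t|B_t-tB_l/l|\le 2\sup_t|B_t|$ combined with the reflection principle for Brownian motion produces only $2e^{-b^2/(8l)}$, which is \emph{too weak} for the claimed estimate. It is therefore essential to exploit the bridge structure—the vanishing of the process at the endpoint—rather than dominating by the running maximum of $B$; the sharp constant in the exponent that the bridge supplies is exactly what provides the slack needed to reach $2e^{-b^2/(2l)}$.
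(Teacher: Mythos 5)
Your proof is correct and takes essentially the same route as the paper: both arguments reduce the two-sided probability to the one-sided one by symmetry of the bridge and then invoke the reflection principle for the Brownian bridge, exactly the structure-exploiting step you rightly identify as essential. The only cosmetic difference is that you rescale to $[0,1]$ and use the exact law $\mathbb{P}\left(\sup_{0\le s\le 1} W^0_s > a\right) = e^{-2a^2}$, obtaining the sharper intermediate bound $2e^{-2b^2/l}$ before relaxing the exponent, whereas the paper routes its one-sided bound through $4\,\mathbb{P}(B_l>b)$ and a Gaussian tail estimate; both land on $2e^{-b^2/(2l)}$.
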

\begin{proof}
We have, by symmetry, a reflection argument for Brownian bridge and a normal tail bound,
\begin{align*}
\mathbb{P}\left(\sup_{0\leq t\leq l} |B_t-t B_l/l|>b \right) & \leq  2 \mathbb{P} \left(\sup_{0\leq t\leq l} (B_t-t B_l/l)>b\right) \\
& \leq  4 \mathbb{P}(B_l>b) \leq  2 e^{-\frac{b^2}{2l}}.
\end{align*}
\end{proof}

Now we proceed to proving Theorem \ref{membed}.
\begin{proof}[Proof of Theorem \ref{membed}]
Let 
\begin{equation*}
b=(96lm\beta^2 \log n)^{1/2}.
\end{equation*}
It satisfies $b\leq 4l\beta$ in Lemma \ref{Sbounds} since $m\leq l/(6 \log n)$ by \eqref{mbetal}. 
Note that if $\sup_{0\leq t\leq n}|S_t-B_t|>3b$, then either $\max_{j\in [d]}|T_j|>b$, or the fluctuation of either $S_t$ or $B_t$ within each subinterval is larger than $b$. 
By the union bound and Lemmas \ref{Tbounds}--\ref{Bbounds}, we have
\begin{equation*}
\mathbb{P}\left(\sup_{0\leq t\leq n}|S_t-B_t|>3b\right)\leq \frac{C(m^4\beta^6+\eta)}{m\beta^2 n^{\varepsilon}\log n}.
\end{equation*}
This proves the theorem.
\end{proof}

 \bigskip
\section*{Acknowledgments} 
\noindent The initial portion of this work was conducted at the meeting \emph{Stein's method and applications in high-dimensional statistics} held at the American Institute of Mathematics in August 2018. We are indebted to Bhaswar Bhattacharya, Sourav Chatterjee, Persi Diaconis and Jon Wellner for helpful discussions throughout the project. We would also like to express our gratitude to John Fry and staff Estelle Basor, Brian Conrey, and Harpreet Kaur at the American Institute of Mathematics for  the generosity and excellent hospitality in hosting this meeting at the Fry's Electronics corporate headquarters in San Jose, CA, and Jay Bartroff, Larry Goldstein, Stanislav Minsker and Gesine Reinert for organizing such a stimulating meeting. 

WT thanks Yuting Ye for communicating the problem of the limiting distribution of the sojourn time of a random walk generated from the uniform permutation, which brings him to this work, and Jim Pitman for helpful discussions. SH acknowledges support from the NSF DMS grant 1501767. XF acknowledges support from Hong Kong RGC ECS 24301617, GRF 14302418, 14304917.

Finally, we thank the Institute for Mathematical Sciences at the National University of Singapore, where part of this work was continued, for their kind support.


\end{document}